\documentclass[12pt]{amsart}
\pdfoutput=1

\usepackage[paperheight=11in,paperwidth=8.5in,left=1in,top=1.25in,right=1in,bottom=1.25in]{geometry}
\usepackage[pdfstartpage={1},pdfstartview={FitH},pdftitle={},pdfauthor={},pdfsubject={},pdfcreator={},pdfproducer={},pdfkeywords={},pagebackref=false,colorlinks=true,linkcolor=black,citecolor=black,filecolor=black,urlcolor=black,]{hyperref}
\hfuzz=144pt

\usepackage{afterpage}
\makeatletter
\setlength{\@fptop}{0pt plus 1fil}
\setlength{\@fpbot}{0pt plus 1fil}
\makeatother
\usepackage{amsfonts}
\usepackage{amsmath}
\allowdisplaybreaks[4]
\usepackage{amssymb}
\usepackage{amsthm}
\usepackage{array}
\usepackage{bbm}
\usepackage[justification=centering]{caption}
\usepackage[capitalize,nameinlink,noabbrev,compress]{cleveref}
\usepackage{enumerate}
\usepackage{float}
\restylefloat{figure}
\usepackage{graphicx}
\usepackage{mathrsfs} 
\usepackage{mathtools}
\usepackage[framemethod=tikz,ntheorem,xcolor]{mdframed}
\usepackage{parcolumns}
\usepackage{tabularx}
\usepackage{tikz}\pdfpageattr{/Group <</S /Transparency /I true /CS /DeviceRGB>>}
\usetikzlibrary{arrows,calc,intersections,matrix,positioning,through}
\usepackage{tikz-cd}
\usepackage[all]{xy}
\usepackage[aligntableaux=center]{ytableau}

\numberwithin{equation}{section}

\newtheorem{thm}[equation]{Theorem}
\AfterEndEnvironment{thm}{\noindent\ignorespaces}

\AfterEndEnvironment{conj}{\noindent\ignorespaces}
\newtheorem{cor}[equation]{Corollary}
\AfterEndEnvironment{cor}{\noindent\ignorespaces}
\newtheorem{lem}[equation]{Lemma}
\AfterEndEnvironment{lem}{\noindent\ignorespaces}

\AfterEndEnvironment{prob}{\noindent\ignorespaces}
\newtheorem{prop}[equation]{Proposition}
\AfterEndEnvironment{prop}{\noindent\ignorespaces}

\theoremstyle{definition}
\newtheorem{defn}[equation]{Definition}
\AfterEndEnvironment{defn}{\noindent\ignorespaces}
\newtheorem*{pf_no_qed}{Proof}

\AfterEndEnvironment{proof}{\noindent\ignorespaces}
\newtheorem{eg_no_qed}[equation]{Example}

\newenvironment{eg}[1][]{\begin{eg_no_qed}[#1]\renewcommand{\qedsymbol}{$\Diamond$}\pushQED{\qed}}{\popQED\end{eg_no_qed}}
\AfterEndEnvironment{eg}{\noindent\ignorespaces}

\AfterEndEnvironment{rmk}{\noindent\ignorespaces}

\theoremstyle{remark}
\newtheorem*{claim}{Claim}
\AfterEndEnvironment{claim}{\noindent\ignorespaces}
\newtheorem*{claimpf_no_qed}{Proof of Claim}
\newenvironment{claimpf}[1][]{\begin{claimpf_no_qed}[#1]\renewcommand{\qedsymbol}{$\square$}\pushQED{\qed}}{\popQED\end{claimpf_no_qed}}
\AfterEndEnvironment{claimpf}{\noindent\ignorespaces}
 

\DeclareMathOperator{\End}{End}

\DeclareMathOperator{\GL}{GL}
\DeclareMathOperator{\gl}{\mathfrak{gl}}
\DeclareMathOperator{\Gr}{Gr}

\DeclareMathOperator{\interior}{int}

\DeclareMathOperator{\Mat}{Mat}

\newcommand{\pref}[1]{\hyperref[#1]{\pageref{#1}}}

\newcommand{\rf}[1]{\hyperref[#1]{(\ref*{#1})}}

\DeclareMathOperator{\spn}{span}

\def\R{{\mathbb R}}
\def\RR{{\mathbb R}}
\def\CC{{\mathbb C}}
\def\RP{{\mathbb P}}

\def\X{{\mathcal{X}}}

\def\Grtnn{\Gr_{\ge 0}(k,n)}
\def\Grtp{\Gr_{>0}(k,n)}

\newcommand{\exps}[1]{\exp(#1\shift)}
\def\expts{\exps{t}}

\def\shift{{S}}

\def\cs{cyclically symmetric }
\def\ampl{\mathcal{A}_{n,k,m}(Z)}
\def\Zsym{Z_0}
\def\amplZsym{\mathcal{A}_{n,k,m}(\Zsym)}

\def\Id{\operatorname{Id}}
\def\Z{{\mathbb Z}}

\def\Plucker{\Delta}

\def\Lin{{\RR^{\binom{2n}{n-1}}}}
\def\Proj{{\RP^{\binom{2n}{n-1}-1}}}
\def\e{e}
\def\odd{[2n]_{\operatorname{odd}}}
\def\even{[2n]_{\operatorname{even}}}
\def\NC{\mathcal{NC}_n}
\def\sigmadual{\tilde\sigma}

\def\Hspace{\mathcal{H}}
\def\A{A}
\def\Grrr{\Gr(n-1,2n)}
\def\X{\mathcal{X}}
\def\Grtn{\Gr_{\ge 0}(n-1,2n)}

\def\Up{u_i}
\def\Down{d_i}
\def\isolate{\sigma'(i)}
\newcommand\isol[1]{\sigma'(#1)}
\def\UD{\Phi}

\def\openball{Q}
\def\closedball{\overline{\openball}}
\def\affinespan{R}
\def\flow{f}
\newcommand\act[2]{\flow(#1,#2)}

\def\eigtau{u}
\def\l{\lambda}
\def\mat{\Mat(k,n-k)}
\def\expttau{\exp(t\tau)}

\def\proj{\pi}
\def\emb{\gamma}
\def\ZsymGr{{(Z_0)_{\Gr}}}
\def\openball{Q_0}
\def\closedball{\overline{Q_0}}
\def\openball{Q}
\def\closedball{\overline{Q}}
\def\bigpart{\kappa}
\def\smallpart{\mu}

\title{The totally nonnegative Grassmannian is a ball}

\author{Pavel Galashin}
\address{Department of Mathematics, University of California, Los Angeles, 520 Portola Plaza, Math Sciences Building, Los Angeles, CA 90095, USA}
\email{\href{mailto:galashin@math.ucla.edu}{galashin@math.ucla.edu}}
\author{Steven N. Karp}
\address{LaCIM, Universit\'{e} du Qu\'{e}bec \`{a} Montr\'{e}al, CP 8888, Succ.\ Centre-ville, Montr\'{e}al, QC H3C 3P8, Canada}
\email{\href{mailto:karp.steven@courrier.uqam.ca}{karp.steven@courrier.uqam.ca}}
\author{Thomas Lam}
\address{Department of Mathematics, University of Michigan, 2074 East Hall, 530 Church Street, Ann Arbor, MI 48109-1043, USA}
\email{\href{mailto:tfylam@umich.edu}{tfylam@umich.edu}}
\thanks{P.G.\ was supported by an Alfred P. Sloan Research Fellowship and by the National Science Foundation under Grants No.~DMS-1954121 and No.~DMS-2046915. S.N.K.\ was supported by the National Science Foundation under Grant No.~DMS-1600447 and by the Natural Sciences and Engineering Research Council of Canada under a Postdoctoral Fellowship. T.L.\ was supported by a von Neumann Fellowship from the Institute for Advanced Study and by the National Science Foundation under Grants No.~DMS-1464693 and No.~DMS-1953852.}

\begin{document}

\begin{abstract}
We prove that three spaces of importance in topological combinatorics are homeomorphic to closed balls: the totally nonnegative Grassmannian, the compactification of the space of electrical networks, and the cyclically symmetric amplituhedron.
\end{abstract}

\date{\today}
\subjclass[2020]{05E45, 14M15, 15B48, 52Bxx}
\keywords{Total positivity, Grassmannian, unipotent group, amplituhedron, electrical networks}

\maketitle

\section{Introduction}\label{sec:intro}

\noindent The prototypical example of a closed ball of interest in topological combinatorics is a convex polytope. Over the past few decades, an analogy between convex polytopes, and certain spaces appearing in total positivity and in electrical resistor networks, has emerged~\cite{Lus2,FS,Pos,CGV,CIM}. One motivation for this analogy is that these latter spaces come equipped with cell decompositions whose face posets share a number of common features with the face posets of polytopes~\cite{Wil,Hersh,RW}. A new motivation for this analogy comes from recent developments in high-energy physics, where physical significance is ascribed to certain differential forms on positive spaces which generalize convex polytopes~\cite{abcgpt,AT,ABL}. In this paper we show in several fundamental cases that this analogy holds at the topological level: the spaces themselves are closed balls.

\subsection{The totally nonnegative Grassmannian}\label{intro:Gr}

Let $\Gr(k,n)$ denote the Grassmannian of $k$-planes in $\RR^n$.  Postnikov~\cite{Pos} defined its totally nonnegative part $\Grtnn$ as the set of $X\in\Gr(k,n)$ whose Pl\"ucker coordinates are all nonnegative.  The totally nonnegative Grassmannian is not a polytope, but Postnikov conjectured that it is the `next best thing', namely, a regular CW complex homeomorphic to a closed ball.  He found a cell decomposition of $\Gr_{\ge 0}(k,n)$, where each open cell is specified by requiring some subset of the Pl\"{u}cker coordinates to be strictly positive, and requiring the rest to equal zero.

Over the past decade, much work has been done towards Postnikov's conjecture. The face poset of the cell decomposition (described in~\cite{Rietsch,rietsch06,Pos}) was shown to be shellable by Williams~\cite{Wil}. Postnikov, Speyer, and Williams~\cite{PSW} showed that the cell decomposition is a CW complex, and Rietsch and Williams~\cite{RW} showed that it is regular up to homotopy, i.e., the closure of each cell is contractible. Our first main theorem is: 
\begin{thm}\label{thm:Gr}
The space $\Grtnn$ is homeomorphic to a $k(n-k)$-dimensional closed ball.
\end{thm}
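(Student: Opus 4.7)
The plan is to build a homeomorphism from $\Grtnn$ to a closed ball using a contracting flow that comes from a one-parameter semigroup of totally positive matrices. Choose $\shift$ so that $\expts$ is totally positive for every $t>0$ and has $n$ distinct real eigenvalues; a convenient option is a symmetric Jacobi matrix with positive sub- and super-diagonal entries, for which this is classical. Left multiplication by $\expts$ descends to a well-defined semigroup action $\act{t}{x}$ on $\Gr(k,n)$, and by the Cauchy--Binet formula it maps $\Grtnn$ into $\Grtp$ whenever $t>0$. A Perron--Frobenius argument applied to the $k$th exterior power of $\expts$ shows that the flow has a unique fixed point $\Zsym\in\Grtp$, namely the span of the top $k$ eigenvectors of $\shift$, and that $\act{t}{x}\to\Zsym$ as $t\to\infty$ for every $x\in\Grtnn$.

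With the flow in hand, I pick a small round closed ball $\closedball$ centered at $\Zsym$ inside the affine span $\affinespan$ of the Pl\"ucker image of $\Grtnn$, small enough that $\closedball\subset\Grtp$. The main technical step is to show that for each $x\in\Grtnn\setminus\interior(\closedball)$, the orbit $t\mapsto\act{t}{x}$ enters $\closedball$ at a unique finite time $\tau(x)\ge 0$ and thereafter remains inside it, with $\tau$ a continuous function of $x$. Equivalently, $t\mapsto d(\act{t}{x},\Zsym)$, measured in $\affinespan$, strictly decreases at the moment when the orbit first crosses $\partial\closedball$, so the orbit is transverse to $\partial\closedball$ there.

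Given such a transversal hitting-time function $\tau$, the assignment $x\mapsto\bigl(\act{\tau(x)}{x},\,e^{-\tau(x)}\bigr)$ furnishes a homeomorphism from $\Grtnn\setminus\interior(\closedball)$ onto the mapping cylinder $\partial\closedball\times(0,1]$, which extends continuously so that the topological boundary of $\Grtnn$ is sent to $\partial\closedball\times\{0\}$. Gluing this collar onto $\closedball$ along $\partial\closedball\times\{1\}$ identifies $\Grtnn$ with a closed $N$-ball, where $N=\dim\affinespan=\dim\Gr(k,n)=k(n-k)$ since $\Grtp$ is full-dimensional in $\Gr(k,n)$.

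The principal obstacle is the transversality / strict monotonicity claim above: one must exhibit a Lyapunov-type functional on $\Grtnn\setminus\{\Zsym\}$ that strictly decreases along every orbit of $\flow$, including orbits starting on the stratified boundary where low-dimensional positroid cells can cause naive Pl\"ucker-coordinate estimates to degenerate. I would attack this by invoking a Birkhoff-type Hilbert projective metric contraction for totally positive matrices on the positive cone of $\RR^n$, transferring it to $\Grtnn$ via the Pl\"ucker embedding of $\Gr(k,n)$ into the projectivized positive cone of $\wedge^k\RR^n$, using that $\expts$ maps the positive cone strictly into its interior for all $t>0$.
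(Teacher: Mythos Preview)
Your overall strategy---act by $\exp(t\tau)$ for a suitable symmetric operator $\tau$ so that the flow sends $\Grtnn$ into $\Grtp$ and contracts to a unique fixed point, then build the homeomorphism via hitting times on a small ball around that point---is exactly the paper's. Two points in your execution need repair.

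First, a concrete error: the affine span of the Pl\"ucker image of $\Grtnn$ has dimension $\binom{n}{k}-1$, not $k(n-k)$, since the Pl\"ucker embedding of $\Gr(k,n)$ is linearly nondegenerate (each coordinate subspace $E_I\in\Grtnn$ has a single nonzero Pl\"ucker coordinate). Hence for $1<k<n-1$ a round ball of positive radius in your $R$ cannot lie inside the $k(n-k)$-dimensional manifold $\Grtp$, and your equation $N=\dim R=k(n-k)$ is false. What you actually need is a ball in a \emph{chart} of $\Gr(k,n)$ near the fixed point, not in the ambient linear span.

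Second, your Hilbert-metric proposal does not close the gap you correctly flag. Birkhoff contraction on the positive cone of $\wedge^k\RR^n$ shows that the Hilbert distance to the fixed point strictly decreases along orbits, but this says nothing about crossing a \emph{Euclidean} sphere in some chart exactly once, nor about forward-invariance of a Euclidean ball; to salvage it you would have to replace your round ball by a Hilbert ball and then argue separately that its intersection with $\Grtp$ is a $k(n-k)$-ball. The paper bypasses all of this with one idea your sketch is missing: take the chart to be the Schubert cell relative to an orthonormal eigenbasis $u_1,\dots,u_n$ of $\tau$. In the resulting coordinates $\phi^{-1}:\Grtnn\hookrightarrow\Mat(k,n-k)$ the flow becomes \emph{linear and diagonal}, $A_{i,j}\mapsto e^{t(\lambda_{k+j}-\lambda_i)}A_{i,j}$, and since $\lambda_k>\lambda_{k+1}$ every exponent is negative. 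Thus the Euclidean norm on $\Mat(k,n-k)$ is itself the Lyapunov function you were looking for, Euclidean balls centered at $0$ are automatically forward-invariant, and the hitting-time/gluing construction goes through cleanly (packaged in the paper as \cref{lem:top}).
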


It remains an open problem to establish Postnikov's conjecture, i.e., to address arbitrary cell closures in the cell decomposition of $\Grtnn$.\footnote{Since this paper was completed, we verified Postnikov's conjecture using different methods \cite{GKL}.} Each of Postnikov's cells determines a matroid known as a \emph{positroid}, and \cref{thm:Gr} also reflects how positroids are related via specialization (see~\cite{ARW} for a related discussion about oriented matroids).

Separately, Lusztig~\cite{Lus2} defined and studied the totally nonnegative part $(G/P)_{\geq 0}$ of a partial flag variety of a split real reductive group $G$.  In the case $G/P = \Gr(k,n)$, Rietsch showed that Lusztig's and Postnikov's definitions of the totally nonnegative part are equivalent (see e.g.~\cite[Remark 3.8]{LamCDM} for a proof). Lusztig~\cite{LusIntro} showed that $(G/P)_{\geq 0}$ is contractible, and our approach to \cref{thm:Gr} is similar to his (see \cref{sec:remarks}).  
We discuss the case $(G/P)_{\geq 0}$ in a separate work~\cite{GP}.

Our proof of \cref{thm:Gr} employs a certain vector field $\tau$ on $\Grtnn$. The flow defined by $\tau$ contracts all of $\Grtnn$ to a unique fixed point $X_0\in\Grtnn$. We construct a homeomorphism from $\Grtnn$ to a closed ball $B\subset\Grtnn$ centered at $X_0$, by mapping each trajectory in $\Grtnn$ to its intersection with $B$. A feature of our construction is that we do not rely on any cell decomposition of $\Grtnn$.

\subsection{The totally nonnegative part of the unipotent radical of \texorpdfstring{$\GL_n(\RR)$}{GL(n)}}\label{intro:U}
The interest in totally nonnegative spaces from the viewpoint of combinatorial topology dates back at least to Fomin and Shapiro~\cite{FS}. Edelman~\cite{edelman_81} had shown that intervals in the poset formed by the symmetric group $\mathfrak{S}_n$ with Bruhat order are shellable, whence Bj\"orner's results~\cite{Bjo} imply that there exists a regular CW complex homeomorphic to a ball whose face poset is isomorphic to $\mathfrak{S}_n$.  Fomin and Shapiro~\cite{FS} suggested that such a CW complex could be found naturally occurring in the theory of total positivity.

Namely, let $U \subset \GL_n(\RR)$ be the subgroup of all upper-triangular unipotent matrices, and $U_{\ge 0}$ its totally nonnegative part, where all minors are nonnegative. Let $V_{\ge 0}$ denote the link of the identity of $U_{\ge 0}$. The intersection of $V_{\geq 0}$ with the Bruhat stratification of $U$ induces a decomposition of $V_{\geq 0}$ into cells, whose face poset is isomorphic to $\mathfrak{S}_n$. Fomin and Shapiro~\cite[Conjecture~1.10]{FS} conjectured that $V_{\ge 0}$ is a regular CW complex, which was proved by Hersh~\cite{Hersh}. Applying her result to the cell of top dimension implies that $V_{\geq 0}$ is homeomorphic to an $\left(\binom{n}{2}-1\right)$-dimensional closed ball. We give a new proof of this special case to exhibit the wide applicability of our methods. We emphasize that our techniques in their present form are not able to address the other (lower-dimensional) cell closures in $V_{\ge 0}$, which appear in Hersh's result.  In addition, Fomin and Shapiro's conjecture, as well as Hersh's theorem, hold in arbitrary Lie types, while we only consider type $A$.

\subsection{The \cs amplituhedron}\label{intro:amplituhedron}
A robust connection between the totally nonnegative Grassmannian and the physics of scattering amplitudes was developed in~\cite{abcgpt}, which led
Arkani-Hamed and Trnka~\cite{AT} to define topological spaces called \emph{amplituhedra}.  A distinguishing feature that these topological spaces share (conjecturally) with convex polytopes is the existence of a canonical differential form \cite{ABL}.  This brings the analogy between totally nonnegative spaces and polytopes beyond the level of face posets.

Let $k,m,n$ be nonnegative integers with $k+m \leq n$, and $Z$ be a $(k+m) \times n$ matrix whose $(k+m) \times (k+m)$ minors are all positive. 
We regard $Z$ as a linear map $\RR^n\to\RR^{k+m}$, which induces a map $Z_{\Gr}$ on $\Gr(k,n)$ taking the subspace $X$ to the subspace $\{Z(v) : v\in X\}$. 
The \emph{(tree) amplituhedron} $\ampl$ is the image of $\Grtnn$ in $\Gr(k,k+m)$ under the map $Z_{\Gr}$~\cite[Section 4]{AT}. When $k = 1$, the totally nonnegative Grassmannian $\Gr_{\ge 0}(1,n)$ is a simplex in $\mathbb{P}^{n-1}$, and the amplituhedron $\mathcal{A}_{n,1,m}(Z)$ is a {\itshape cyclic polytope} in $\mathbb{P}^m$~\cite{sturmfels_88}. Understanding the topology of amplituhedra, and more generally of \emph{Grassmann polytopes}~\cite{LamCDM} (obtained by relaxing the positivity condition on $Z$), was one of the main motivations of our work.

We now take $m$ to be even, and $Z = Z_0$ such that the rows of $Z_0$ span the unique element of $\Gr_{\ge 0}(k+m,n)$ invariant under the $\Z/n\Z$-cyclic action (cf.~\cite{karp_cyclic_shift}).  We call $\amplZsym$ the {\itshape \cs amplituhedron}.  When $k = 1$ and $m =2$, $\mathcal{A}_{n,1,2}(Z_0)$ is a regular $n$-gon in the plane.  More generally, $\mathcal{A}_{n,1,m}(Z_0)$ is a polytope whose vertices are $n$ regularly spaced points on the {\itshape trigonometric moment curve} in $\mathbb{P}^{m}$.

\begin{thm}\label{thm:ampli}
The \cs amplituhedron $\amplZsym$ is homeomorphic to a $km$-dimensional closed ball.
\end{thm}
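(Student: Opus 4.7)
The plan is to mirror the method used for Theorem~\ref{thm:Gr} by transporting the contracting flow on $\Grtnn$ to one on $\amplZsym$ via the amplituhedron map $Z_{\Gr}$. The essential structural input is the cyclic invariance of $Z_0$: since the row span of $Z_0$ is preserved by the shift $\shift$ on $\RR^n$, there is a matrix $M \in \GL_{k+m}(\RR)$ with $Z_0 \shift = M Z_0$. By induction this identity upgrades to $Z_0 \shift^j = M^j Z_0$, hence to the equivariance
\[
Z_{\Gr}(\expts \cdot X) = \exp(tM) \cdot Z_{\Gr}(X), \qquad X \in \Gr(k,n), \; t \in \RR.
\]
Thus the one-parameter subgroup $\exp(tM) \subset \GL_{k+m}(\RR)$ acts on $\Gr(k,k+m)$ and, because $\expts$ preserves $\Grtnn$, preserves $\amplZsym$. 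It also fixes the distinguished point $Y_0 := Z_{\Gr}(X_0) \in \amplZsym$, where $X_0$ is the fixed point from the proof of Theorem~\ref{thm:Gr}.

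With this in hand, I would first verify that $\exp(tM)$ contracts all of $\amplZsym$ to $Y_0$: for any $Y = Z_{\Gr}(X) \in \amplZsym$ with $X \in \Grtnn$, equivariance and continuity of $Z_{\Gr}$ give $\exp(tM)\cdot Y = Z_{\Gr}(\expts\cdot X) \to Z_{\Gr}(X_0) = Y_0$. Next, I would carry out a local analysis of $\amplZsym$ near $Y_0$: show that it is a topological $km$-manifold with $Y_0$ in its interior, reducing to surjectivity of $dZ_{\Gr}$ at $X_0$ onto the tangent space of $\amplZsym$ (exploiting the total positivity of $Z_0$). Finally, I would apply the ``trajectory-to-ball'' construction of Theorem~\ref{thm:Gr}: choose a small closed $km$-ball $B' \subset \amplZsym$ centered at $Y_0$, and send each $Y \in \amplZsym \setminus \{Y_0\}$ to the unique intersection of its $\exp(tM)$-trajectory with $\partial B'$, extending continuously by $Y_0 \mapsto Y_0$.

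The main obstacle is the uniqueness and transversality required for the trajectory-intersection map to be a homeomorphism: each non-fixed trajectory of $\exp(tM)$ on $\amplZsym$ must cross $\partial B'$ exactly once and transversely. This calls for control of the eigenstructure of $M$ on the tangent space at $Y_0$, which should follow from the spectral decomposition of $M$ in terms of the characters of the $\Z/n\Z$-action underlying $Z_0$, together with a description of how those eigenspaces sit relative to the tangent cone of $\amplZsym$ at $Y_0$. Once these local statements are in place -- in particular, a real-part inequality on the eigenvalues of $M$ ensuring genuine contraction along every tangent direction to $\amplZsym$ at $Y_0$ -- the homeomorphism to a $km$-dimensional closed ball is built by exactly the same trajectory-to-ball argument used for Theorem~\ref{thm:Gr}, now performed inside $\amplZsym$ rather than $\Grtnn$.
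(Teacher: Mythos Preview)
Your strategy---push a contracting flow from $\Grtnn$ forward to $\amplZsym$ via an equivariance of the amplituhedron map, then run the trajectory-to-ball construction---is exactly what the paper does, but the paper's execution dissolves the obstacles you flag rather than confronting them.

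The point you are missing is that the rows of $\Zsym$ are taken to be the orthonormal eigenvectors $u_1,\dots,u_{k+m}$ of $\tau=\shift+\shift^T$, so as a linear map $\Zsym:\RR^n\to\RR^{k+m}$ sends $u_i\mapsto e_i$ for $i\le k+m$ and $u_i\mapsto 0$ otherwise. In the coordinates $\phi:\Mat(k,n-k)\to\Gr(k,n)$ of \cref{sec:glob-coord}, the composite $(\Zsym)_{\Gr}\circ\phi$ therefore sends $A=[A'\mid A'']$ (with $A'\in\Mat(k,m)$) to the row span of $[\Id_k\mid A']$; up to a fixed chart on $\Gr(k,k+m)$ it is the linear projection $\pi:A\mapsto A'$. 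This has three consequences. First, $Q_0:=\pi(\phi^{-1}(\Grtp))$ is open in $\Mat(k,m)\cong\RR^{km}$ because $\pi$ is an open map, so no local manifold analysis of $\amplZsym$ near $Y_0$ is required. Second, the paper uses $\tau$ rather than $\shift$, so the induced flow on $\Mat(k,m)$ is the diagonal linear flow $A'_{i,j}\mapsto e^{t(\lambda_{k+j}-\lambda_i)}A'_{i,j}$ with real strictly negative exponents; contractivity in the Euclidean norm is immediate from \cref{tau_properties}\eqref{tau_eigenvalues}. Third, the invariance hypothesis of \cref{lem:top} is obtained simply by applying $\pi$ to \cref{cor:expttau_preserves_Grtp}.

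Your choice of $\shift$ over $\tau$ is precisely what generates the complex-eigenvalue difficulty: the induced $M$ acts by rotations on the $2$-dimensional $\tau$-eigenspaces, so no standard norm on the chart realises the strict monotone decrease of \cref{dfn:contract}, and you would have to build a Lyapunov quadratic form by hand. Since the row span of $\Zsym$ is equally $\tau$-invariant, replacing $\shift$ by $\tau$ throughout removes this issue; after that substitution and the coordinate identification above, your outline and the paper's proof coincide.
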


It is expected that every amplituhedron is homeomorphic to a closed ball.  The topology of amplituhedra and Grassmann polytopes is not well understood in general; see~\cite{Karp-Williams, Arkani-Hamed-Thomas-Trnka} for recent work. 

\subsection{The compactification of the space of planar electrical networks}\label{intro:E}
Let $\Gamma$ be an electrical network consisting only of resistors, modeled as an undirected graph whose edge weights (conductances) are positive real numbers. The electrical properties of $\Gamma$ are encoded by the response matrix $\Lambda(\Gamma): \RR^n \to \RR^n$, sending a vector of voltages at $n$ distinguished boundary vertices to the vector of currents induced at the same vertices.  The response matrix can be computed using (only) Kirchhoff's law and Ohm's law.  Following Curtis, Ingerman, and Morrow~\cite{CIM} and Colin de Verdi\`ere, Gitler, and Vertigan~\cite{CGV}, we consider the space $\Omega_n$ of response matrices of planar electrical networks: those $\Gamma$ embedded into a disk, with boundary vertices on the boundary of the disk. This space is not compact; a compactification $E_n$ was defined by the third author in~\cite{Lam}. It comes equipped with a natural embedding $\iota: E_n \hookrightarrow \Gr_{\geq 0}(n-1,2n)$. We exploit this embedding to establish the following result.

\begin{thm}\label{thm:En}
The space $E_n$ is homeomorphic to an $\binom{n}{2}$-dimensional closed ball.
\end{thm}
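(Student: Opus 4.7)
The plan is to adapt the proof strategy of \cref{thm:Gr} to the subspace $\iota(E_n) \subset \Grtn$. Since $\iota$ is a topological embedding and $E_n$ is compact, it suffices to show that $\iota(E_n)$ is homeomorphic to a closed ball of dimension $\binom{n}{2}$. The first step is to verify that $\iota(E_n)$ is a closed subset of $\Grtn$ of topological dimension $\binom{n}{2}$; both assertions should follow from the construction of the compactification in \cite{Lam}, which realizes $\iota(E_n)$ as a union of certain positroid cells (the ``electroid cells'') indexed by combinatorial data such as non-crossing partitions or planar matchings on $[n]$, with the top-dimensional cell being the image of the space $\Omega_n$ of response matrices of planar electrical networks.

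Next I would identify a distinguished fixed point $Y_0 \in \iota(E_n)$ that will play the role of the attractor $X_0$ from the proof of \cref{thm:Gr}. A natural candidate is the image under $\iota$ of a planar electrical network with full $\Z/n\Z$ rotational symmetry, for instance the network on the regular $n$-gon in which all conductances are equal. Since the embedding $\iota$ is compatible with cyclic rotation of boundary vertices (which on $\Grtn$ corresponds to a specific cyclic shift on $[2n]$), such a point will be invariant under the same symmetries that single out $X_0$.

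The crux of the argument is to show that the vector field $\tau$ used in the proof of \cref{thm:Gr}, specialized to the parameters $(k, n) \mapsto (n-1, 2n)$, restricts to a vector field on $\iota(E_n)$, so that $\iota(E_n)$ is flow-invariant. Granting this invariance, the flow contracts $\iota(E_n)$ onto $Y_0$, and the same trajectory-to-ball construction used in \cref{thm:Gr} (mapping each orbit to its unique transverse intersection with a small closed ball around $Y_0$) yields a homeomorphism from $\iota(E_n)$ onto a closed ball of dimension $\binom{n}{2}$.

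The main obstacle is the flow-invariance step. This requires a precise characterization of $\iota(E_n)$ as a subvariety of $\Grtn$, either as the common zero locus of a collection of Pl\"ucker coordinates (those indexed by $(n-1)$-subsets of $[2n]$ that are not ``concentric''/non-crossing in the appropriate sense), or as the fixed locus of a natural involution associated to the doubling $n \leadsto 2n$ used in the definition of $\iota$. One then has to verify that $\tau$ preserves these conditions, which should be tractable if $\tau$ has a sufficiently natural form with respect to the cyclic and combinatorial structure on $\Grtn$. Should direct restriction fail, the backup is to construct a tailored vector field on $\iota(E_n)$ with analogous contraction and transversality properties, perhaps arising from a torus action compatible with the electroid stratification.
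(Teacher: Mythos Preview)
Your outline matches the paper's strategy: restrict the contractive flow $\exp(t\tau)$ from \cref{sec:Gr} (with parameters $(n-1,2n)$) to $\iota(E_n)$ and invoke \cref{lem:top}. You also correctly flag flow-invariance as the crux. However, the specific mechanisms you propose for that step differ from what the paper actually does, and one of them is miscast.

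The paper does not describe $\iota(E_n)$ as the zero locus of individual Pl\"ucker coordinates; rather, $E_n$ is identified with $\X_n \cap \Grtn$ where $\X_n = \Grrr \cap \mathcal{H}$ and $\mathcal{H}\subset\Proj$ is the \emph{linear span} of the vectors $A_\sigma = \sum_{I\text{ concordant with }\sigma} e_I$ over non-crossing partitions $\sigma\in\NC$. Flow-invariance then amounts to showing that the infinitesimal action of $\tau$ on Pl\"ucker space, namely $\Phi = \sum_{i=1}^{2n}(u_i + d_i)$, preserves $\mathcal{H}$. This is done by a direct combinatorial computation (\cref{lemma:UD}): for each $i$, $(u_i+d_i)(A_\sigma)$ is either $0$ or equals $A_{\sigma'(i)}$ for an explicit non-crossing partition $\sigma'(i)$ obtained from $\sigma$ by a simple refinement or coarsening. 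Combined with \cref{cor:expttau_preserves_Grtp}, this yields $\exp(t\tau)\cdot E_n \subset (E_n)_{>0}$ for $t>0$ (\cref{exp(ts)_En}). No involution is used, and your ``vanishing Pl\"ucker coordinates'' formulation would have to be recast as linear relations among Pl\"ucker coordinates to make contact with this argument.

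Two further gaps in your outline. First, applying \cref{lem:top} requires that $(E_n)_{>0}$ be a \emph{smooth} embedded submanifold of dimension $\binom{n}{2}$ inside $\Gr_{>0}(n-1,2n)$, not merely a cell of the right topological dimension; the paper establishes this separately (\cref{prop:elec}) by showing that the inclusion is a monomial map in suitable edge-weight coordinates, hence a Lie group homomorphism $\RR_{>0}^{\binom{n}{2}}\hookrightarrow\RR_{>0}^{(n-1)(n+1)}$. Second, there is no need to locate the attractor $Y_0$ explicitly or to rebuild the trajectory-to-ball homeomorphism by hand: once the hypotheses of \cref{lem:top} are verified for $\openball = \phi^{-1}((E_n)_{>0})$ with the same contractive flow $f$ from \cref{sec:proof}, the conclusion is immediate.
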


A cell decomposition of $E_n$ was defined in~\cite{Lam}, extending earlier work in~\cite{CIM,CGV}. The face poset of this cell decomposition had been defined and studied by Kenyon~\cite[Section~4.5.2]{Kenyon}. \cref{thm:En} says that the closure of the unique cell of top dimension in $E_n$ is homeomorphic to a closed ball.  In~\cite{Lammatchings}, the third author showed that the face poset of the cell decomposition of $E_n$ is Eulerian, and conjectured that it is shellable. Hersh and Kenyon recently proved this conjecture~\cite{hersh_kenyon}. Bj\"orner's results~\cite{Bjo} therefore imply that this poset is the face poset of some regular CW complex homeomorphic to a ball. We expect that $E_n$ forms such a CW complex, so that the closure of every cell of $E_n$ is homeomorphic to a closed ball. Proving this remains an open problem.

\subsection{Outline}
In \cref{sec:contracting}, we prove a topological lemma (\cref{lem:top}) which essentially states that if one can find a \emph{contractive flow} (defined below) on a submanifold of $\RR^N$, then its closure is homeomorphic to a ball. We use \cref{lem:top} in \cref{sec:Gr,sec:U,sec:amplituhedron,sec:E} to show  that the four spaces discussed in \cref{intro:Gr,intro:U,intro:amplituhedron,intro:E} are homeomorphic to closed balls. To do so, in each case we consider a natural flow on the underlying space, and show that it satisfies the contractive property by introducing novel coordinates on the space.\\

\noindent {\bfseries Acknowledgements.} We thank Patricia Hersh and Lauren Williams for helpful comments, and anonymous referees for many suggestions leading to improvements in the exposition.

\section{Contractive flows}\label{sec:contracting}

\noindent In this section we prove \cref{lem:top}, which we will repeatedly use in establishing our main theorems. Consider a real normed vector space $(\RR^N,\|\cdot\|)$. Thus for each $r>0$, the closed ball $B_r^N:=\{p\in\RR^N: \|p\|\leq r\}$ of radius $r$ is a compact convex body in $\RR^N$ whose interior contains the origin. We denote its boundary by $\partial B_r^N$, which is the sphere of radius $r$.

\begin{defn}\label{dfn:contract}
We say that a map $\flow:\RR\times \RR^N\to\RR^N$ is a \emph{contractive flow} if the following conditions are satisfied:
\begin{enumerate}[(1)]
\item\label{item:continuous} the map $\flow$ is continuous;
\item\label{item:action} for all $p\in\RR^N$ and $t_1,t_2\in\RR$, we have $\act 0 p=p$ and $\act{t_1+t_2}{p}=\act{t_1}{\act{t_2}{p}}$; and
\item\label{eq:contract} for all $p\neq 0$ and $t > 0$, we have $\|\act{t}{p}\| < \|p\|$.
\end{enumerate}
\end{defn}

The condition~\eqref{item:action} says that $\flow$ induces a group action of $(\RR, +)$ on $\RR^N$. In particular, $\act{t}{p} = q$ is equivalent to $\act{-t}{q} = p$, so~\eqref{eq:contract} implies that if $t\neq 0$ and $\act{t}{p} = {p}$, then $p = 0$. The converse to this statement is given below in \cref{lem:limits_act}\eqref{act:1}.
\begin{lem}\label{lem:limits_act}
Let $\flow:\RR\times \RR^N\to\RR^N$ be a contractive flow.
\begin{enumerate}[(i)]
\item\label{act:1}
We have $\act{t}{0} = 0$ for all $t\in\RR$.
\item\label{act:2}
Let $p\neq 0$. Then the function $t\mapsto\|\act{t}{p}\|$ is strictly decreasing on $(-\infty,\infty)$.
\item\label{act:3}
Let $p\neq 0$. Then $\displaystyle\lim_{t\to\infty}\|\act t p\|=0$ and $\displaystyle\lim_{t\to-\infty} \|\act t p\|=\infty$.
\end{enumerate}
\end{lem}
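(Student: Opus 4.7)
\medskip

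\noindent \textbf{Proof plan.} My plan is to handle the three parts in order, using only the three axioms of a contractive flow together with the observation, already recorded after \cref{dfn:contract}, that $\flow$ is a $(\RR,+)$-action and hence each map $p \mapsto \act{t}{p}$ is a homeomorphism of $\RR^N$ with inverse $p \mapsto \act{-t}{p}$.

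For~\eqref{act:1}, I would first treat $t>0$ by a squeeze argument that couples continuity with contractivity: for any sequence $p_n\to 0$ with $p_n\neq 0$, condition~\eqref{eq:contract} gives $\|\act{t}{p_n}\|<\|p_n\|\to 0$, so $\act{t}{p_n}\to 0$; on the other hand, condition~\eqref{item:continuous} gives $\act{t}{p_n}\to \act{t}{0}$, forcing $\act{t}{0}=0$. The case $t<0$ is then propagated via the group law in~\eqref{item:action}: $\act{t}{0} = \act{t}{\act{-t}{0}} = \act{0}{0} = 0$. The case $t=0$ is immediate from~\eqref{item:action}.

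For~\eqref{act:2}, I would reduce to~\eqref{eq:contract} by translating along the orbit. Given $p\neq 0$ and $t_1<t_2$, set $q:=\act{t_1}{p}$; part~\eqref{act:1} and the injectivity remark above ensure $q\neq 0$. Applying~\eqref{item:action} gives $\act{t_2}{p}=\act{t_2-t_1}{q}$, and since $t_2-t_1>0$, axiom~\eqref{eq:contract} yields $\|\act{t_2}{p}\|<\|q\|=\|\act{t_1}{p}\|$.

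For~\eqref{act:3}, the monotonicity in~\eqref{act:2} already guarantees that the limits $L:=\lim_{t\to\infty}\|\act{t}{p}\|\in[0,\|p\|]$ and $M:=\lim_{t\to-\infty}\|\act{t}{p}\|\in[\|p\|,\infty]$ exist. To rule out $L>0$, I would use compactness: the forward orbit lies in the compact annulus $\{L\le\|x\|\le\|p\|\}$, so some sequence $t_n\to\infty$ yields $\act{t_n}{p}\to q$ with $\|q\|=L>0$. For any fixed $s>0$, continuity and the group law give
\[
\act{s}{q} \;=\; \lim_{n\to\infty}\act{s+t_n}{p},
\]
whose norm equals $L=\|q\|$, contradicting~\eqref{eq:contract} applied to $q\neq 0$. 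The symmetric argument, applied under the (to-be-contradicted) hypothesis $M<\infty$ so that the backward orbit sits in the compact annulus $\{\|p\|\le\|x\|\le M\}$, rules out $M<\infty$.

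\medskip

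\noindent \textbf{Expected obstacle.} The only point requiring genuine care is that axiom~\eqref{eq:contract} is one-sided, constraining only positive times. This means that neither fixed-point arguments nor monotonicity can be applied naively in negative time; every statement for $t<0$ must be bootstrapped from the positive-time statements via the group action in~\eqref{item:action}. This is exactly why part~\eqref{act:1} cannot be obtained by setting $p=0$ in~\eqref{eq:contract}, and why in part~\eqref{act:3} the divergence $M=\infty$ must be argued by contradiction via compactness rather than directly.
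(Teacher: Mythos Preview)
Your proposal is correct. Part~\eqref{act:2} is essentially identical to the paper's argument, but parts~\eqref{act:1} and~\eqref{act:3} take genuinely different routes.

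For~\eqref{act:1}, the paper argues internally along the curve $s\mapsto \act{s}{0}$: if $\act{t}{0}\neq 0$ for some $t>0$, then by continuity some intermediate $s\in(0,t)$ satisfies $0<\|\act{s}{0}\|<\|\act{t}{0}\|$, and applying axiom~\eqref{eq:contract} to the point $\act{s}{0}$ with time $t-s$ gives a contradiction. Your squeeze argument via an external sequence $p_n\to 0$ is shorter and makes the role of continuity more transparent; the paper's version, on the other hand, never leaves the single orbit through $0$.

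For~\eqref{act:3}, the paper proves a uniform statement first: it shows that the forward-invariant compact set $K_r:=\bigcap_{s\ge 0}\act{s}{B_r^N}$ equals $\{0\}$ for every $r>0$, by observing that a point of maximal norm in $K_r$ cannot lie in $\act{t}{K_r}\supset K_r$ unless it is $0$. Both the forward limit $L=0$ (take $r=\|p\|$) and the backward limit $M=\infty$ (if $M<\infty$ then $p\in K_M=\{0\}$) then drop out immediately. Your argument is the classical $\omega$-limit-set approach: extract a subsequential limit $q$ on the relevant sphere and derive $\|\act{s}{q}\|=\|q\|$ from the group law and continuity. Your route is perhaps more familiar from dynamical systems; the paper's route packages the two limits into a single lemma about $K_r$ and avoids choosing subsequences.
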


\begin{proof}
\eqref{act:1}
By~\eqref{item:continuous}, the function $s\mapsto\|\act{s}{0}\|$ is continuous on $\RR$, and it equals $0$ when $s=0$. If $\act{t}{0}\neq 0$ for some $t > 0$, then $0 < \|\act{s}{0}\| < \|\act{t}{0}\|$ for some $s\in (0,t)$, which contradicts~\eqref{eq:contract} applied to $p = \act{s}{0}$ and $t-s$. Therefore $\act{t}{0} = 0$ for all $t\geq0$. By~\eqref{item:action}, for $t\ge 0$ we have  $0 = \act{0}{0} = \act{-t}{\act{t}{0}}=\act{-t}0$, and so $\act{-t}{0} = 0$ as well.

\eqref{act:2} This follows from~\eqref{eq:contract} and the fact that $\flow$ induces a group action of $\RR$ on $\RR^N$, once we know that $\act{t}{p}$ is never $0$. But if $\act{t}{p} = 0$ then $\act{-t}{0} = p$, which contradicts part~\eqref{act:1}.

\eqref{act:3} Let $r_1(p)$ and $r_2(p)$ denote the respective limits. By part~\eqref{act:2}, both limits exist, where $r_1(p)\in [0,\infty)$ and $r_2(p)\in (0,\infty]$. By compactness, there exists a point $q\in\RR^N$ with $\|q\| = r_1(p)$, along with an unbounded increasing sequence $t_1,t_2,\dots$ in $\mathbb{R}$ satisfying $\lim_{i\to\infty} \act{t_i}{p}=q$. If $r_1(p) > 0$, then using~\eqref{item:continuous}--\eqref{eq:contract}, we find
\begin{align*}
r_1(p)=\|q\|>\|\act{1}{q}\|=\lim_{i\to\infty}\|\act{1}{\act{t_i}{p}}\|=\lim_{i\to\infty}\|\act{1+t_i}{p}\|=r_1(p),
\end{align*}
a contradiction. Thus $r_1(p)=0$. Similarly, we get $r_2(p)=\infty$.\footnote{We thank an anonymous referee for suggesting this simpler argument.}
\end{proof}

For $K\subset \RR^N$ and $t\in\RR$, we let $\act{t}{K}$ denote $\{\act{t}{p}: p\in K\}$.
\begin{lem}\label{lem:top}
Let $\openball \subset \RR^N$ be a smooth embedded submanifold of dimension $d \leq N$, and $\flow:\RR\times \RR^N\to\RR^N$ a contractive flow. Suppose that $\openball$ is bounded and satisfies the condition
\begin{align}\label{eq:invariant}
\act{t}{\closedball} \subset \openball \quad \text{ for $t > 0$}.
\end{align}
Then the closure $\closedball$ is homeomorphic to a closed ball of dimension $d$, and $\closedball \setminus \openball$ is homeomorphic to a sphere of dimension $d-1$.
\end{lem}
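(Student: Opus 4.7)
My strategy is to use the contractive flow $\flow$ to identify $\closedball$ with the (topological) cone over a small transversal $(d-1)$-sphere about the unique fixed point of the flow. To begin, picking any $p \in \openball \subset \closedball$, \cref{lem:limits_act}\eqref{act:3} gives $\act{t}{p} \to 0$ as $t \to \infty$; since \eqref{eq:invariant} forces $\act{t}{p} \in \openball$ for $t > 0$, we obtain $0 \in \closedball$, and then $0 = \act{1}{0} \in \openball$ by another application of \eqref{eq:invariant}. Because $\openball$ is a smoothly embedded submanifold it is locally closed in $\RR^N$, so there is a neighborhood $V$ of $0$ with $V \cap \closedball = V \cap \openball$ and a smooth chart diffeomorphing $\openball$ to an open subset of $\RR^d$ on this set. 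Sard's theorem applied to $\|\cdot\|^2$ on $V \cap \openball$ provides arbitrarily small regular values $r > 0$; fixing such an $r$ with $B_r^N \subset V$ and setting $D := \closedball \cap B_r^N = \openball \cap B_r^N$ produces a compact smooth $d$-submanifold-with-boundary of $\openball$, homeomorphic to a closed $d$-ball with $\partial D \cong S^{d-1}$. Condition \eqref{eq:contract} plus forward invariance of $\openball$ makes $D$ forward-invariant under $\flow$.

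Next I parameterize $\closedball$ via the cross section. By \cref{lem:limits_act}, for each $p \in \closedball \setminus \{0\}$ the map $t \mapsto \|\act{t}{p}\|$ is a strictly decreasing continuous bijection $\RR \to (0, \infty)$, so there is a unique $t_r(p) \in \RR$ with $\pi(p) := \act{t_r(p)}{p} \in \partial D$, with $t_r$ and $\pi$ both continuous on $\closedball \setminus \{0\}$. For each $\tilde q \in \partial D$ let $T^+(\tilde q) := \sup\{s \ge 0 : \act{-s}{\tilde q} \in \closedball\}$; this is finite (since $\closedball$ is bounded while $\|\act{-s}{\tilde q}\| \to \infty$) and attained (by closedness of $\closedball$). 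Applying \eqref{eq:invariant} at a small positive $\epsilon$ to $\act{-(s+\epsilon)}{\tilde q} \in \closedball$ shows that $\act{-s}{\tilde q} \in \openball$ for every $s \in [0, T^+(\tilde q))$, whereas $\act{-T^+(\tilde q)}{\tilde q}$ lies in $\closedball \setminus \openball$; hence each nonzero trajectory meets $\closedball$ in a compact arc whose unique endpoint in $\closedball \setminus \openball$ is $\act{-T^+(\pi(p))}{\pi(p)}$.

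After establishing that $T^+: \partial D \to (0, \infty)$ is continuous, I define $\Phi: \closedball \to C(\partial D) := (\partial D \times [0, 1])/(\partial D \times \{0\})$ by $\Phi(0) := \text{apex}$ and $\Phi(p) := (\pi(p),\, \exp(t_r(p) - T^+(\pi(p))))$ for $p \neq 0$. A routine check shows $\Phi$ is a continuous bijection from the compact space $\closedball$ onto the Hausdorff cone, hence a homeomorphism. Since $\partial D \cong S^{d-1}$, the cone $C(\partial D)$ is a closed $d$-ball, and $\Phi$ carries $\closedball \setminus \openball = \{\act{-T^+(\tilde q)}{\tilde q} : \tilde q \in \partial D\}$ onto the boundary $\partial D \times \{1\} \cong S^{d-1}$, yielding both claims. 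The principal obstacle is proving continuity of $T^+$: upper semicontinuity is immediate from the closedness of $\closedball$, but lower semicontinuity is subtler and demands that perturbing $\tilde q$ slightly cannot discontinuously shorten the backward-flow exit time within $\closedball$. Its proof leverages the strict containment $\act{-s}{\tilde q} \in \openball$ for $s < T^+(\tilde q)$ established above, combined with the embedded-submanifold structure of $\openball$ via local charts along the trajectory, to force nearby backward trajectories to remain in $\closedball$ for comparable times.
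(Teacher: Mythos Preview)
Your plan is essentially the paper's proof in different packaging: both define the hitting time $t_r$ to a small sphere and the backward exit time from $\closedball$ (your $T^+\!\circ\pi$ is the paper's $-t_\partial$), and both build a homeomorphism by sending $p$ to the pair $(\pi(p),\,t_r(p)-t_\partial(p))$. You map to the abstract cone $C(\partial D)$ while the paper maps directly to the small ball $B=D$ via $p\mapsto \act{t_r(p)-t_\partial(p)}{p}$; these are the same construction.

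Two points to tighten. First, Sard's theorem only tells you $D$ is a compact manifold-with-boundary, not that it is a ball or that $\partial D\cong S^{d-1}$; what you actually need is that $0$ is a \emph{nondegenerate} minimum of $\|\cdot\|^2|_Q$ (true because a smoothly embedded $Q$ is locally a graph over $T_0Q$ with vanishing first derivative), so the Morse lemma gives the ball structure for all small $r$. Second, the assertion $\pi(p)\in\partial D$ when $\|p\|<r$ is not automatic: flowing $p$ \emph{backward} to radius $r$ need not a priori remain in $Q$. The paper handles this (and simultaneously your lower-semicontinuity obstacle) by introducing the flow-saturation $R=\{\act{t}{p}:t\in\RR,\,p\in\closedball\}$, which is an increasing union $\bigcup_{n\ge 1}\act{-n}{Q}$ of $d$-manifolds and hence itself a $d$-manifold; invariance of domain then makes $Q$ open in $R$, and your ``local charts along the trajectory'' argument goes through cleanly once you work inside $R$ rather than $\RR^N$.
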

Note that any open subset of $\RR^N$ is a smooth embedded submanifold of dimension $N$.

\begin{proof}
Since $\openball$ is bounded, its closure $\closedball$ is compact. By \cref{lem:limits_act}\eqref{act:3} and \eqref{eq:invariant} we have $0 \in \closedball$, and therefore $0\in\openball$. Because $\openball$ is smoothly embedded, we can take $r>0$ sufficiently small so that $B := B^N_r \cap \openball$ is homeomorphic to a closed ball of dimension $d$. We let $\partial B$ denote $(\partial B^N_r)\cap\openball$, which is a $(d-1)$-dimensional sphere.

For any $p\in\RR^N\setminus\{0\}$, consider the curve $t\mapsto \act t p$ starting at $p$ and defined for all $t\in\RR$. By \cref{lem:limits_act}\eqref{act:2}, this curve intersects the sphere $\partial B^N_r$ for a unique $t\in\RR$, which we denote by $t_r(p)$. 
Also, for $p \in \closedball \setminus \{0\}$, define $t_\partial(p)\in(-\infty,0]$ as follows.  Let $T(p):=\{t\in\RR : \act{t}{p}\in \closedball\}$. We have $0\in T(p)$, and $T(p)$ is bounded from below by \cref{lem:limits_act}\eqref{act:3} because $\closedball$ is bounded.  By \eqref{eq:invariant}, if $t\in T(p)$ then $[t,\infty)\subset T(p)$.  Also, $T(p)$ is closed since it is the preimage of $\closedball$ under the continuous map $t\mapsto \act{t}{p}$.  It follows that $T(p) = [t_\partial(p),\infty)$ for some $t_\partial(p)\in(-\infty,0]$.
\begin{claim}
The functions $t_r$ and $t_\partial$ are continuous on $\closedball\setminus\{0\}$.
\end{claim}

\begin{claimpf}
First we prove that $t_r$ is continuous on $\RR^N\setminus\{0\}$. It suffices to show that the preimage of any open interval $I\subset\RR$ is open. To this end, let $q\in t_r^{-1}(I)$. Take $t_1, t_2\in I$ with $t_1 < t_r(q) < t_2$. By \cref{lem:limits_act}\eqref{act:2}, we have $\|\act{t_1}{q}\| > r > \|\act{t_2}{q}\|$. Note that the map $\gamma_1 : \RR^N \to \RR^N, p \mapsto \act{t_1}p$ is continuous and $\RR^N \setminus B_r^N$ is open, so $\gamma_1^{-1}(\RR^N \setminus B_r^N)$ is an open neighborhood of $q$. Similarly, defining $\gamma_2 : \RR^N \to \RR^N, p\mapsto \act{t_2}p$, we have that $\gamma_2^{-1}(\interior(B^N_r))$ is an open neighborhood of $q$. Therefore $\gamma_1^{-1}(\RR^N \setminus B_r^N) \cap \gamma_2^{-1}(\interior(B^N_r))$ is an open neighborhood of $q$, whose image under $t_r$ is contained in $(t_1, t_2)\subset I$. This shows that $t_r$ is continuous on $\RR^N\setminus\{0\}$.

Next, let us define
\begin{align}\label{eq:M}
\affinespan:= \{\act t p : (t,p) \in \RR \times \closedball\}.
\end{align}
The map $b: \RR \times \partial B \to \affinespan \setminus \{0\}$ defined by $(t,p) \mapsto \act t p$ is a continuous bijection. (Recall that $\partial B = (\partial B^N_r)\cap\openball$.) Its inverse $p\mapsto (-t_r(p),\act{t_r(p)}p)$ is continuous as well. Therefore $b$ is a homeomorphism. We claim that $\openball$ is relatively open in $\affinespan$. Indeed, since $\openball\setminus\{0\}$ is a submanifold of $\affinespan\setminus \{0\}$ of the same dimension $d$, we deduce that $\openball\setminus\{0\}$ is an open subset of $\affinespan\setminus \{0\}$. Also, $\openball$ contains the neighborhood $\interior(B^N_r) \cap R$ of $0$ in $R$. Thus $\openball$ is an open subset of $\affinespan$.

We now prove that the map $t_\partial:\closedball\setminus\{0\}\to\RR$ is continuous, by a very similar argument. Let $I\subset \RR$ be an open interval and consider a point $q\in t_\partial^{-1}(I)$. Take $t_1, t_2\in I$ with $t_1 < t_\partial(q) < t_2$. By the definition of $t_\partial$, we have $\act{t_1}q\in \affinespan\setminus \closedball$. By~\eqref{eq:invariant}, we have  $\act{t_2}q\in\openball$. Note that the map $\gamma_1 : \affinespan \to \affinespan, p \mapsto \act{t_1}p$ is continuous and $\affinespan\setminus \closedball$ is open in $\affinespan$, so $\gamma_1^{-1}(\affinespan\setminus \closedball)$ is an open neighborhood of $q$ in $\affinespan$. Similarly, defining $\gamma_2 : \affinespan\to\affinespan, p\mapsto \act{t_2}p$, we have that $\gamma_2^{-1}(\openball)$ is an open neighborhood of $q$ in $\affinespan$. Therefore $\gamma_1^{-1}(\affinespan\setminus \closedball)\cap \gamma_2^{-1}(\openball)\cap\closedball$ is an open neighborhood of $q$ in $\closedball$, whose image under $t_\partial$ is contained inside $(t_1,t_2)\subset I$. This finishes the proof of the claim.
\end{claimpf}

Define the maps $\alpha:\closedball \to B$ and $\beta: B\to\closedball$ by
\[
\alpha(p) := \act{t_r(p) - t_\partial(p)}p,\quad \beta(p) := \act{t_\partial(p) - t_r(p)}p
\]
for $p\neq 0$, and $\alpha(0) := 0$, $\beta(0) := 0$. Let us verify that $\alpha$ sends $\closedball$ inside $B$ and $\beta$ sends $B$ inside $\closedball$. If $p\in\closedball\setminus\{0\}$, then $\act{t_r(p)}p\in B$ and $t_\partial(p)\leq 0$, whence the contractive property~\eqref{eq:contract} implies $\alpha(p)=\act{-t_\partial(p)}{\act{t_r(p)}p}\in B$. Similarly, if $p\in B\setminus\{0\}$, then $\act{t_\partial(p)}{p}\in\closedball$ and $t_r(p)\leq 0$, whence~\eqref{eq:invariant} implies $\beta(p) = \act{-t_r(p)}{\act{t_\partial(p)}{p}}\in\closedball$.

Now we check that $\alpha$ and $\beta$ are inverse maps. For any $p\in\closedball$ and $\Delta t\in\RR$ such that $\act{\Delta t}p\in \closedball$, we have
\[
t_r(\act{\Delta t}p)=t_r(p)-\Delta t,\quad t_\partial(\act{\Delta t}p)=t_\partial(p)-\Delta t.
\]
Taking $\Delta t:=t_\partial(p)-t_r(p)$, we find
\[
\alpha(\beta(p)) = \alpha(\act{\Delta t}{p}) = \act{t_r(\act{\Delta t}{p}) - t_\partial(\act{\Delta t}{p})}{\act{\Delta t}p} = \act{-\Delta t}{\act{\Delta t}p} = p.
\]
We can similarly verify that $\beta(\alpha(p))=p$, by instead taking $\Delta t := t_r(p)-t_\partial(p)$.

By the claim, $t_r$ and $t_\partial$ are continuous on $\closedball\setminus\{0\}$, so $\alpha$ is continuous everywhere except possibly at $0$. Also, $t_r(p)>t_\partial(p)$ for all $p\in\closedball\setminus \{0\}$, so $\alpha$ is continuous at $0$ by \cref{lem:limits_act}\eqref{act:2}. Thus $\alpha$ is a continuous bijection from a compact space to a Hausdorff space, so it is a homeomorphism. This shows that $\closedball$ is homeomorphic to a closed $d$-dimensional ball.

It remains to prove that $\closedball\setminus\openball$ is homeomorphic to a $(d-1)$-dimensional sphere. We claim that $\alpha$ restricts to a homeomorphism from $\closedball\setminus\openball$ to $\partial B$. We need to check that $\alpha$ sends $\closedball\setminus\openball$ inside $\partial B$, and $\beta$ sends $\partial B$ inside $\closedball\setminus\openball$. To this end, let $p\in\closedball\setminus\openball$. By condition~\eqref{item:action}, we have $p = \act{-t_\partial(p)}{\act{t_\partial(p)}{p}}$. Hence if $t_\partial(p) < 0$, then~\eqref{eq:invariant} implies $p\in Q$, a contradiction. Therefore $t_\partial(p) = 0$, and $\alpha(p) = f(t_r(p),p)\in\partial B$. Now let $q\in\partial B$. We have $t_r(q) = 0$, so $\beta(q) = \act{t_\partial(q)}{q}$. If $\beta(q)\in Q$, then $\act{t_\partial(q)-t}{q}\in Q$ for $t > 0$ sufficiently small (as $Q$ is open in $R$ from~\eqref{eq:M}), contradicting the definition of $t_\partial(q)$. Thus $\beta(q)\in\closedball\setminus\openball$.
\end{proof}

\section{The totally nonnegative Grassmannian}\label{sec:Gr}

\noindent Let $\Gr(k,n)$ denote the \emph{real Grassmannian}, the space of all $k$-dimensional subspaces of $\RR^n$. We set $[n]:=\{1,\dots,n\}$, and let $\binom{[n]}{k}$ denote the set of $k$-element subsets of $[n]$. For $X\in\Gr(k,n)$, we denote by $(\Delta_I(X))_{I\in{\binom{[n]}{k}}}\in\RP^{\binom{n}{k}-1}$ the \emph{Pl\"ucker coordinates} of $X$: $\Delta_I(X)$ is the $k\times k$ minor of $X$ (viewed as a $k\times n$ matrix modulo row operations) with column set $I$.

Recall that $\Gr_{\ge 0}(k,n)$ is the subset of $\Gr(k,n)$ where all Pl\"{u}cker coordinates are nonnegative (up to a common scalar). We also define the {\itshape totally positive Grassmannian} $\Gr_{>0}(k,n)$ as the subset of $\Gr_{\ge 0}(k,n)$ where all Pl\"ucker coordinates are positive.

\subsection{Global coordinates for \texorpdfstring{$\Gr_{\ge 0}(k,n)$}{the totally nonnegative Grassmannian}}\label{sec:glob-coord}

For each $k$ and $n$, we introduce several distinguished linear operators on $\RR^n$. Define the {\itshape left cyclic shift} $\shift\in\gl_n(\RR) = \End(\RR^n)$ by $\shift(v_1, \dots, v_n) := (v_2, \dots, v_n, (-1)^{k-1}v_1)$. The sign $(-1)^{k-1}$ can be explained as follows: if we pretend that $\shift$ is an element of $\GL_n(\RR)$, then the action of $\shift$ on $\Gr(k,n)$ preserves $\Grtnn$ (it acts on Pl\"{u}cker coordinates by rotating the index set $[n]$). 

Note that the transpose $\shift^T$ of $\shift$ is the {\itshape right cyclic shift} given by $S^T(v_1, \dots, v_n) = ((-1)^{k-1}v_n, v_1, \dots, v_{n-1})$. Let $\tau := \shift+\shift^T\in\End(\RR^n)$. We endow $\RR^n$ with the standard inner product, so that $\tau$ (being symmetric) has an orthogonal basis of eigenvectors $\eigtau_1,\dots,\eigtau_n\in\RR^n$ corresponding to real eigenvalues $\l_1\geq\dots\geq \l_n$. Let $X_0\in\Gr(k,n)$ be the linear span of $u_1, \dots, u_k$. The following lemma implies that $X_0$ is totally positive and does not depend on the choice of eigenvectors $u_1, \dots, u_n$.
\begin{lem}\label{tau_properties}~
\begin{enumerate}[(i)] 
\item\label{tau_eigenvalues}
The eigenvalues $\l_1\geq\dots\geq \l_n$ are given as follows, depending on the parity of $k$:
\begin{itemize}
\item if $k$ is even, $\l_1 = \l_2 = 2\cos(\frac{\pi}{n})$, $\l_3 = \l_4 = 2\cos(\frac{3\pi}{n})$, $\l_5 = \l_6 = 2\cos(\frac{5\pi}{n})$, \dots;
\item if $k$ is odd, $\l_1 = 1$, $\l_2 = \l_3 = 2\cos(\frac{2\pi}{n})$, $\l_4 = \l_5 = 2\cos(\frac{4\pi}{n})$, \dots. 
\end{itemize}
In either case, we have
\begin{align*}
\lambda_k = 2\cos\!\left(\textstyle\frac{k-1}{n}\pi\right) > 2\cos\!\left(\textstyle\frac{k+1}{n}\pi\right) = \lambda_{k+1}.
\end{align*}
\item\label{prop:X_0}\cite{scott}
The Pl\"{u}cker coordinates of $X_0$ are given by
\[
\Delta_I(X_0) = \prod_{i,j\in I,\; i < j}\sin\!\left(\textstyle\frac{j-i}{n}\pi\right) > 0 \quad \text{ for all $I\in\textstyle\binom{[n]}{k}$}.
\]
\end{enumerate}
\end{lem}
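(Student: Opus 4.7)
The plan for part (i) is to diagonalize $\shift$ explicitly. Since $\shift$ cyclically permutes coordinates with a sign twist, one computes $\shift^n = (-1)^{k-1}\Id$, and $\shift^T = \shift^{-1}$, so $\shift$ is orthogonal and hence normal. Consequently $\shift$ and $\shift^T$ are simultaneously diagonalizable over $\CC$: for each $n$-th root $\mu$ of $(-1)^{k-1}$, the Fourier vector $v^{(\mu)} := (1, \mu, \mu^2, \dots, \mu^{n-1}) \in \CC^n$ is an eigenvector of $\shift$ with eigenvalue $\mu$ and of $\shift^T$ with eigenvalue $\bar\mu$, so $\tau v^{(\mu)} = 2\Re(\mu)\, v^{(\mu)}$. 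For $k$ even, these roots are $\{e^{i\pi(2j-1)/n}\}$, all of which pair with their conjugates into multiplicity-$2$ eigenvalues $2\cos\!\bigl(\frac{(2j-1)\pi}{n}\bigr)$; for $k$ odd they are the $n$-th roots of unity, with $\mu = \pm 1$ giving multiplicity-$1$ eigenvalues $\pm 2$ (when $\pm 1$ is a root) and the remaining roots pairing into multiplicity-$2$ eigenvalues $2\cos\!\bigl(\frac{2j\pi}{n}\bigr)$. Sorting each list in decreasing order matches the two statements, and the uniform identity $\lambda_k = 2\cos\!\bigl(\frac{k-1}{n}\pi\bigr) > 2\cos\!\bigl(\frac{k+1}{n}\pi\bigr) = \lambda_{k+1}$ follows from the strict monotonicity of $\cos$ on $[0,\pi]$ together with $k \le n - 1$.

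For part (ii), the eigenvector analysis from (i) realizes $X_0$ as the real row span of the complex $k \times n$ matrix $M$ whose rows are the Fourier vectors $v^{(\mu)}$ for the $k$ eigenvalues of $\shift$ contributing to the top $k$ eigenvalues of $\tau$ (with both $v^{(\mu)}$ and $v^{(\bar\mu)}$ included for each conjugate pair). The $k \times k$ minor with columns $I = \{i_1 < \dots < i_k\}$ is the generalized Vandermonde $\det(\mu_r^{i_s - 1})$, which factors via the classical identity $\det(\mu_r^{i_s - 1}) = \pm\, V(\mu)\, s_\lambda(\mu)$ into a Vandermonde $V(\mu) = \prod_{r < r'}(\mu_r - \mu_{r'})$ (independent of $I$) times a Schur polynomial $s_\lambda$ in the $\mu_r$, where $\lambda_s = i_{k+1-s} - (k+1-s)$ depends on $I$. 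Writing $\mu_r = e^{i\theta_r}$ and invoking $\mu_r - \mu_{r'} = 2i\sin\!\bigl(\tfrac{\theta_r - \theta_{r'}}{2}\bigr) e^{i(\theta_r + \theta_{r'})/2}$ expresses $V(\mu)$ as a global product of sines times a global phase. The Schur evaluations at roots of unity must then be reorganized via an explicit symmetric-function identity into the product $\prod_{i<j\in I}\sin\!\bigl(\frac{j-i}{n}\pi\bigr)$; after absorbing the global phase projectively one recovers Scott's formula, and positivity is immediate since each sine factor lies in $(0,1]$ for $1 \le j - i \le n - 1$.

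The main obstacle is the Schur-evaluation step in part (ii): turning the root-of-unity evaluations $s_\lambda(\mu_1, \dots, \mu_k)$ into the product-of-sines formula indexed by pairs in $I$ is the nontrivial combinatorial content of Scott's calculation, which I would simply cite~\cite{scott} rather than redo. A secondary concern is the parity bookkeeping in (i)---most cleanly handled by carrying out the diagonalization uniformly over $\CC$ and only separating the real eigenvalues $\mu = \pm 1$ at the sorting step---together with verification that the stated pairing $\lambda_{2j-1} = \lambda_{2j}$ correctly reflects the complex-conjugate pair structure across all parities of $n$ and $k$.
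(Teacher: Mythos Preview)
Your approach to part~(i) is essentially identical to the paper's: both diagonalize $\shift$ over $\CC$ via the Fourier vectors $(1,\mu,\dots,\mu^{n-1})$ for the $n$-th roots $\mu$ of $(-1)^{k-1}$, observe that $\tau$ acts on each by the scalar $\mu+\mu^{-1}=2\cos(\arg\mu)$, and sort.

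For part~(ii) you take a correct but unnecessarily indirect route, and the step you flag as the ``main obstacle'' is in fact avoidable. You factor the generalized Vandermonde $\det(\mu_r^{\,i_s-1})$ via the bialternant identity as $V(\mu)\cdot s_\lambda(\mu)$ and then face the problem of evaluating the Schur polynomial at these particular roots of unity. The paper instead exploits the specific structure of the $\mu_r$: they form a geometric progression $\mu_r=\omega\,\xi^{r-1}$ with $\omega=e^{-i\pi(k-1)/n}$ and $\xi=e^{2\pi i/n}$, so after rescaling column $j$ of $M$ by $\omega^{-(j-1)}$ the entries become $\xi^{(r-1)(j-1)}$. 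The minor on columns $I=\{i_1<\dots<i_k\}$ is then the \emph{ordinary} Vandermonde $\prod_{s<s'}(\xi^{i_{s'}-1}-\xi^{i_s-1})$, and writing each factor as $2i\,e^{i\pi(i_s+i_{s'}-2)/n}\sin\!\bigl(\tfrac{(i_{s'}-i_s)\pi}{n}\bigr)$ gives the product of sines directly; a one-line check shows the accumulated phase cancels against the column-rescaling factor $\omega^{\sum(i_s-1)}$ and is hence $I$-independent. In other words, what your route would achieve via the principal specialization of $s_\lambda$ at a geometric progression is precisely this column-rescaling trick, done by hand without invoking symmetric-function machinery. Both arguments ultimately defer fine details to a citation, so the difference is one of packaging rather than substance, but the paper's version makes clear that no genuinely nontrivial Schur-polynomial identity is needed.
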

For an example in the case of $\Gr(2,4)$, see \cref{eg_Gr24}. (We remark that in the example, the Pl\"{u}cker coordinates of $X_0$ are scaled by a factor of $2$ compared to the formula above.)
\begin{proof}
In this proof, we work over $\CC$. Let $\zeta\in\CC$ be an $n$th root of $(-1)^{k-1}$. There are $n$ such values of $\zeta$, each of the form $\zeta = e^{i\pi m/n}$ for some integer $m$ congruent to $k-1$ modulo $2$. Let $z_m := (1, \zeta, \zeta^2, \dots, \zeta^{n-1})\in\mathbb{C}^n$. We have $S(z_m) = \zeta z_m$ and $S^T(z_m) = \zeta^{-1} z_m$, so
\begin{align}\label{eigenequation}
\tau(z_m) = (\zeta + \zeta^{-1})z_m = 2\cos(\textstyle\frac{\pi m}{n})z_m.
\end{align}
The $n$ distinct $z_m$'s are linearly independent (they form an $n\times n$ Vandermonde matrix with nonzero determinant), so they give a basis of $\CC^n$ of eigenvectors of $\tau$.

We deduce part~\eqref{tau_eigenvalues} from~\eqref{eigenequation}. For part~\eqref{prop:X_0}, we apply Vandermonde's determinantal identity, following an argument outlined by Scott~\cite{scott}. That is, by~\eqref{eigenequation}, the $\CC$-linear span of $u_1, \dots, u_k$ is the same as the span of $z_{-k+1}, z_{-k+3}, z_{-k+5}, \dots, z_{k-1}$. Let $M$ be the matrix whose rows are $z_{-k+1}, z_{-k+3}, z_{-k+5}, \dots, z_{k-1}$, i.e.,
\[
M_{r,j} = e^{i\pi (-k-1+2r)(j-1)/n} \quad \text{ for $1\leq r\leq k$ and $1\leq j\leq n$}.
\]
Then the Pl\"{u}cker coordinates of $X_0$ are the $k\times k$ minors of $M$ (up to a common nonzero complex scalar), which can be computed explicitly by Vandermonde's identity after appropriately rescaling the columns. We refer the reader to~\cite[Proposition 2.5]{karp_cyclic_shift} for details.
\end{proof}

Denote by $\mat$ the vector space of real $k\times (n-k)$ matrices. Define a map $\phi:\mat\to\Gr(k,n)$ by
\begin{align}\label{not:embedding}
\phi(A) := \spn(\eigtau_i + \textstyle\sum_{j=1}^{n-k}A_{i,j}\eigtau_{k+j} : 1\leq i\leq k).
\end{align}
In other words, the entries of $A$ are the usual coordinates on the big Schubert cell of $\Gr(k,n)$ with respect to the basis $u_1, \dots, u_n$ of $\RR^n$, this Schubert cell being
\[
\phi(\mat) =  \{X\in\Gr(k,n) : X\cap\spn(u_{k+1}, \dots, u_n)= 0\}.
\]
In particular, $\phi$ is a smooth embedding, and it sends the zero matrix to $X_0$. For an example in the case of $\Gr(2,4)$, see \cref{eg_Gr24}.

\begin{prop}\label{embedding_properties}
The image $\phi(\mat)$ contains $\Grtnn$.
\end{prop}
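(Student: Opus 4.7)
The image $\phi(\mat)$ is the big Schubert cell
\[
\phi(\mat) \;=\; \{X \in \Gr(k,n) : X \cap \spn(\eigtau_{k+1}, \ldots, \eigtau_n) = 0\},
\]
as already noted in the excerpt, so the task is to show that for every $X \in \Grtnn$, the intersection $X \cap \spn(\eigtau_{k+1}, \ldots, \eigtau_n)$ is trivial. Because $\phi(\mat)$ depends only on the pair of subspaces $X_0 = \spn(\eigtau_1,\ldots,\eigtau_k)$ and $\spn(\eigtau_{k+1},\ldots,\eigtau_n)$, and the inequality $\lambda_k > \lambda_{k+1}$ from \cref{tau_properties}\eqref{tau_eigenvalues} ensures these are complementary orthogonal sums of eigenspaces of the symmetric operator $\tau$, I may rescale the $\eigtau_i$ and assume $\eigtau_1,\ldots,\eigtau_n$ is in fact orthonormal; then $\spn(\eigtau_{k+1},\ldots,\eigtau_n) = X_0^\perp$.

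The plan is to compute the $[k]$-indexed Pl\"ucker coordinate of $X$ in the basis $\eigtau_1,\ldots,\eigtau_n$, call it $\tilde{\Delta}_{[k]}(X)$, and show that it is strictly positive for every $X \in \Grtnn$; the condition $X \cap X_0^\perp = 0$ is precisely the non-vanishing of this coordinate. Let $U$ be the orthogonal matrix whose columns are $\eigtau_1,\ldots,\eigtau_n$, and represent $X$ as the row span of a $k \times n$ matrix $M$ in the standard basis. Then $MU$ represents $X$ in the $\eigtau$-basis, and so the Cauchy--Binet identity yields
\[
\tilde{\Delta}_{[k]}(X) \;=\; \det\!\bigl(M \cdot U_{:,[k]}\bigr) \;=\; \sum_{J \in \binom{[n]}{k}} \Delta_J(X)\,\Delta_J(X_0),
\]
where I have used that the $J$-indexed $k \times k$ submatrix of the $n \times k$ matrix $U_{:,[k]}$ has determinant equal to the Pl\"ucker coordinate $\Delta_J(X_0)$ of $X_0 = \spn(\eigtau_1,\ldots,\eigtau_k)$.

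With this identity in place the conclusion is immediate: by \cref{tau_properties}\eqref{prop:X_0} we have $\Delta_J(X_0) > 0$ for every $J \in \binom{[n]}{k}$, while $\Delta_J(X) \geq 0$ for every $J$ and not all vanishing, because $X \in \Grtnn$ and Pl\"ucker coordinates are projective. Therefore $\tilde{\Delta}_{[k]}(X) > 0$, which gives $X \cap X_0^\perp = 0$ and hence $X \in \phi(\mat)$. I do not expect a serious obstacle in this argument: it is a change-of-basis computation, and all of the nontrivial work is absorbed into the explicit total positivity of $X_0$ from \cref{tau_properties}\eqref{prop:X_0}, which is what converts nonnegativity of $X$'s Pl\"ucker coordinates into strict positivity of the single transformed coordinate $\tilde{\Delta}_{[k]}(X)$.
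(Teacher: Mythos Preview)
Your proof is correct and is essentially the same as the paper's: the quantity $\tilde{\Delta}_{[k]}(X) = \det(M\,U_{:,[k]})$ you compute equals $\det(M_0 M^T)$ (since $U_{:,[k]} = M_0^T$ with your orthonormal convention), and both you and the paper expand this via Cauchy--Binet and invoke the positivity of the Pl\"ucker coordinates of $X_0$ from \cref{tau_properties}\eqref{prop:X_0}. The only cosmetic difference is that you phrase it as a direct computation of a Pl\"ucker coordinate in the $u$-basis, whereas the paper argues by contradiction that a nonzero $v \in X \cap X_0^\perp$ would force $\det(M_0 M^T) = 0$.
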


\begin{proof}
Let $X\in\Grtnn$ be a totally nonnegative subspace. We need to show that $X\cap\spn(u_{k+1}, \dots, u_n)=0$. Suppose otherwise that there exists a nonzero vector $v$ in this intersection. Extend $v$ to a basis of $X$, and write this basis as the rows of a $k\times n$ matrix $M$. Because $X$ is totally nonnegative, the nonzero $k\times k$ minors of $M$ all have the same sign (and at least one minor is nonzero, since $M$ has rank $k$). Also let $M_0$ be the $k\times n$ matrix with rows $u_1, \dots, u_k$. By \cref{tau_properties}\eqref{prop:X_0}, all $k\times k$ minors of $M_0$ are nonzero and have the same sign. The vectors $u_1, \dots, u_n$ are orthogonal, so $v$ is orthogonal to the rows of $M_0$. Hence the first column of $M_0M^T$ is zero, and we obtain $\det(M_0M^T) = 0$. On the other hand, the Cauchy--Binet identity implies
\[
\det(M_0M^T) = \sum_{I\in\binom{[n]}{k}}\det((M_0)_I)\det(M_I),
\]
where $A_I$ denotes the matrix $A$ restricted to the columns $I$. Each summand has the same sign and at least one summand is nonzero, contradicting $\det(M_0M^T) = 0$.
\end{proof}

We have shown that the restriction of $\phi^{-1}$ to $\Grtnn$ yields an embedding
\[
\Grtnn\hookrightarrow\Mat(k,n-k)\simeq \RR^{k(n-k)}
\]
whose restriction to $\Grtp$ is smooth.

\subsection{Flows on \texorpdfstring{$\Gr(k,n)$}{Gr(k,n)}}\label{sec:cyclic-shift-s}

For $g\in\GL_n(\RR)$, we let $g$ act on $\Gr(k,n)$ by taking the subspace $X$ to $g\cdot X := \{g(v) : v\in X\}$. We let $1\in\GL_n(\RR)$ denote the identity matrix, and for $x\in\gl_n(\RR)$ we let $\exp(x) := \sum_{j=0}^\infty\frac{x^j}{j!}\in\GL_n(\RR)$ denote the matrix exponential of $x$.

We examine the action of $\expts$ and $\expttau$ on $\Gr(k,n)$.
\begin{lem}\label{exp(ts)}
For $X\in\Grtnn$ and $t > 0$, we have $\expts\cdot X\in\Grtp$.
\end{lem}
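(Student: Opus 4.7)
The plan is to analyze the action of $\exp(tS)$ on the Plücker vector via the $k$th exterior power representation. Under the Plücker embedding, any $X\in\Gr(k,n)$ corresponds (up to scalar) to $v_X:=\sum_I \Delta_I(X)\,e_I\in\Lambda^k\RR^n$, where $e_I = e_{i_1}\wedge\cdots\wedge e_{i_k}$ for $I=\{i_1<\cdots<i_k\}$. The $\GL_n$-action on $\Gr(k,n)$ lifts to $\Lambda^k$, so that $v_{\exp(tS)\cdot X} = \Lambda^k(\exp(tS))\,v_X = \exp(t\,\partial\Lambda^k(S))\,v_X$, where the endomorphism $\partial\Lambda^k(S)$ of $\Lambda^k\RR^n$ acts by the Leibniz rule
\[
\partial\Lambda^k(S)(e_I) \;=\; \sum_{j=1}^{k} e_{i_1}\wedge\cdots\wedge S(e_{i_j})\wedge\cdots\wedge e_{i_k}.
\]

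The first step is to check that in the basis $\{e_I\}$ the matrix of $\partial\Lambda^k(S)$ has entries in $\{0,1\}$. Since $S(e_i)=e_{i-1}$ for $i\ge 2$ and $S(e_1)=(-1)^{k-1}e_n$, each summand in the Leibniz expansion either vanishes (when $i_j-1 \bmod n$ already lies in $I$) or equals $+e_{I'}$ for $I'=(I\setminus\{i_j\})\cup\{i_j-1\bmod n\}$. For $i_j\ge 2$ the shifted index $i_j-1$ fits strictly between $i_{j-1}$ and $i_j$, so no reordering sign appears; in the wrap-around case $i_1=1$, $n\notin I$, the factor $(-1)^{k-1}$ from $S(e_1)$ is precisely cancelled by the $(-1)^{k-1}$ sign from moving $e_n$ past the remaining $k-1$ wedge factors, yielding $+e_{(I\setminus\{1\})\cup\{n\}}$. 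Next I would verify that the directed graph on $\binom{[n]}{k}$ whose edges are $I\to I'$ exactly when $(\partial\Lambda^k(S))_{I',I}=1$ — that is, $I'$ is obtained from $I$ by decreasing one element by $1\bmod n$ without collision — is strongly connected. Any two subsets $I,J$ can be interconverted by moving elements one at a time around the cyclic group $\Z/n\Z$, temporarily displacing any blockers to make room; so for every pair $(I,J)$ there is some $m\ge 0$ with $((\partial\Lambda^k(S))^m)_{J,I}>0$.

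With these two facts in hand, the conclusion is routine: a matrix $N$ with nonneg entries whose positive-entry digraph is strongly connected has an entrywise strictly positive exponential for all $t>0$, since the series $(\exp(tN))_{JI}=\sum_{m\ge 0}\frac{t^m}{m!}(N^m)_{JI}$ is a sum of nonneg terms containing at least one positive summand guaranteed by strong connectivity. Applying this to $N=\partial\Lambda^k(S)$ yields $(\exp(t\,\partial\Lambda^k(S)))_{J,I}>0$ for every $I,J$ and every $t>0$. For $X\in\Grtnn$, the vector $v_X$ has nonneg coordinates and is nonzero (any matrix representative of $X$ has rank $k$, so some Plücker coordinate is nonzero); hence $\exp(t\,\partial\Lambda^k(S))\,v_X$ has all strictly positive coordinates, which means every Plücker coordinate of $\exp(tS)\cdot X$ is positive, i.e.\ $\exp(tS)\cdot X\in\Grtp$.

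The main obstacle is the strong connectivity claim in the second step. The sign-tracking in the wedge computation is quick, and the Perron-Frobenius style statement about $\exp(tN)$ is standard, but verifying that arbitrary $I,J\in\binom{[n]}{k}$ are linked by a chain of valid single-element cyclic downshifts needs a careful explicit algorithm — say, ``route the elements of $I$ to the positions of $J$ one at a time, walking each one the long way around the cycle if it would otherwise collide with an occupied slot.''
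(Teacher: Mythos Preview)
Your proof is correct and takes a genuinely different route from the paper, though both hinge on the same combinatorial fact: the strong connectivity of the directed graph on $\binom{[n]}{k}$ whose edges are cyclic single-element shifts. The paper splits the lemma into two pieces: (i) $\exp(tS)$ preserves $\Gr_{\ge 0}(k,n)$ for $t\ge 0$, proved via the approximation $\exp(tS)=\lim_j(1+tS/j)^j$ and a direct multilinear check that $1+tS$ preserves Pl\"ucker nonnegativity; and (ii) any $X\in\Gr_{\ge 0}(k,n)\setminus\Gr_{>0}(k,n)$ exits $\Gr_{\ge 0}(k,n)$ under the flow for small $t<0$, proved by a first-order expansion together with strong connectivity to locate an edge $I\to I'$ with $\Delta_I(X)=0$ and $\Delta_{I'}(X)>0$. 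Your argument is more direct: you observe that $\partial\Lambda^k(S)$ is an entrywise nonnegative matrix in the $e_I$ basis with strongly connected support digraph, so $\exp(t\,\partial\Lambda^k(S))$ is entrywise strictly positive for every $t>0$, and the conclusion follows in one stroke. The paper's decomposition has the incidental benefit that its first-order formula \eqref{expts_Pluckers} is reused later (in the proof of \cref{exp(ts)_En}); your approach is cleaner and sidesteps the two-step ``preserve forward, escape backward'' maneuver. For the strong-connectivity step you left sketched, the paper supplies a tidy argument you can borrow (for the reverse graph, hence yours by reversing arrows): one can pass from $[k]$ to any $\{i_1<\dots<i_k\}$ by shifting $k\to i_k$, then $k-1\to i_{k-1}$, etc.; from any $I$ to $\{n-k+1,\dots,n\}$ similarly; and from $\{n-k+1,\dots,n\}$ back to $[k]$ by wrapping each element around.
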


\begin{proof}
We claim that it suffices to prove the following two facts:
\begin{enumerate}[(i)]
\item\label{exp(ts)_1} for $X\in\Grtnn$ and $t \ge 0$, we have $\expts\cdot X\in\Grtnn$; and
\item\label{exp(ts)_2} for $X\in\Grtnn\setminus\Grtp$, we have $\expts\cdot X\notin\Grtnn$ for all $t < 0$ sufficiently close to zero.
\end{enumerate}
To see why this is sufficient, let $X\in\Grtnn$ and $t > 0$. By part~\eqref{exp(ts)_1}, we have $\expts\cdot X\in\Grtnn$, so we just need to show that $\expts\cdot X\in\Grtp$. Suppose otherwise that $\expts\cdot X\notin\Grtp$. Then applying part~\eqref{exp(ts)_2} to $\expts\cdot X$, we get that $\exp((t+t')\shift)\cdot X\notin\Grtnn$ for $t' < 0$ sufficiently close to zero. But by part~\eqref{exp(ts)_1}, we know that $\exp((t+t')S)\cdot X\in\Grtnn$ for all $t'$ in the interval $[-t,0]$. This is a contradiction.

Now we prove parts~\eqref{exp(ts)_1} and~\eqref{exp(ts)_2}. We will make use of the operator $1+t\shift$, which belongs to $\GL_n(\RR)$ for $|t|<1$. Note that if $[M_1 \mid \cdots \mid M_n]$ is a $k\times n$ matrix representing $X$, then a $k\times n$ matrix representing $(1+t\shift)\cdot X$ is
\[
M' = [M_1 + tM_2 \mid M_2 + tM_3 \mid \cdots \mid M_{n-1} + tM_n \mid M_n + (-1)^{k-1}tM_1].
\]
We can evaluate the $k\times k$ minors of $M'$ using multilinearity of the determinant. We obtain
\begin{align}\label{eq:1+tS}
\Delta_I((1+t\shift)\cdot X) = \sum_{\epsilon\in\{0,1\}^k}t^{\epsilon_1 + \dots + \epsilon_k}\Delta_{\{i_1+\epsilon_1, \dots, i_k+\epsilon_k\}}(X) \quad \text{ for } I = \{i_1, \dots, i_k\}\subset [n],
\end{align}
where $i_1+\epsilon_1, \dots, i_k+\epsilon_k$ are taken modulo $n$. Therefore $(1+t\shift)\cdot X\in\Grtnn$ for $X\in\Grtnn$ and $t\in [0,1)$. Since $\expts = \lim_{j\to\infty}\left(1+\frac{tS}{j}\right)^j$ and $\Grtnn$ is closed, we obtain $\expts\cdot X\in\Grtnn$ for $t \ge 0$. This proves part~\eqref{exp(ts)_1}.

To prove part~\eqref{exp(ts)_2}, first note that $\expts = 1 + t\shift + O(t^2)$. By~\eqref{eq:1+tS}, we have
\begin{align}\label{expts_Pluckers}
\Delta_I(\expts\cdot X) = \Delta_I(X) + t\sum_{I'}\Delta_{I'}(X) + O(t^2)\quad\text{ for }I\in\textstyle\binom{[n]}{k},
\end{align}\par\vspace*{-4pt}\noindent
where the sum is over all $I'\in\binom{[n]}{k}$ obtained from $I$ by increasing exactly one element by $1$ modulo $n$. If we can find such $I$ and $I'$ with $\Delta_I(X) = 0$ and $\Delta_{I'}(X) > 0$, then $\Delta_I(\expts\cdot X) < 0$ for all $t < 0$ sufficiently close to zero, thereby proving part~\eqref{exp(ts)_2}. In order to do this, we introduce the directed graph $D$ with vertex set $\binom{[n]}{k}$, where $J\to J'$ is an edge of $D$ if and only if we can obtain $J'$ from $J$ by increasing exactly one element by $1$ modulo $n$. Note that for any two vertices $K$ and $K'$ of $D$, there exists a directed path from $K$ to $K'$:
\begin{enumerate}[\hspace*{24pt}$\bullet$]
\item we can get from $[k]$ to any $\{i_1 < \dots < i_k\}$ by shifting $k$ to $i_k$, $k-1$ to $i_{k-1}$, etc.;
\item similarly, we can get from any $\{i_1 < \dots < i_k\}$ to $\{n-k+1, n-k+2, \dots, n\}$;
\item we can get from $\{n-k+1, \dots, n\}$ to $[k]$ by shifting $n$ to $k$, $n-1$ to $k-1$, etc.
\end{enumerate}
Now take $K,K'\in\binom{[n]}{k}$ with $\Delta_K(X) = 0$ and $\Delta_{K'}(X) > 0$, and consider a directed path from $K$ to $K'$. It goes through an edge $I\to I'$ with $\Delta_I(X) = 0$ and $\Delta_{I'}(X) > 0$, as desired.
\end{proof}

Now we consider $\expttau=\exp(t(\shift+\shift^T))$. Recall that $S$ and $S^T$ are the left and right cyclic shift maps, so by symmetry \cref{exp(ts)} holds with $S$ replaced by $S^T$. Also, $S$ and $S^T$ commute, so $\expttau = \exp(t\shift)\exp(t\shift^T)$. We obtain the following.
\begin{cor}\label{cor:expttau_preserves_Grtp}
For $X\in\Grtnn$ and $t > 0$, we have $\expttau\cdot X\in\Grtp$.
\end{cor}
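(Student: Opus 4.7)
The plan is to reduce this to two applications of \cref{exp(ts)}, using the factorization
\[
\expttau \;=\; \exp(t(\shift+\shift^T)) \;=\; \exp(t\shift)\,\exp(t\shift^T).
\]
This factorization is valid because $\shift$ and $\shift^T$ commute; in fact a direct computation from the definitions yields $\shift\shift^T = \id_{\RR^n}$, so $\shift^T=\shift^{-1}$, and commuting operators exponentiate additively.

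First I would record that \cref{exp(ts)} holds verbatim with $\shift$ replaced by $\shift^T$. The proof of \cref{exp(ts)} rests on exactly two ingredients: the multilinear Pl\"ucker expansion~\eqref{eq:1+tS} for $(1+t\shift)\cdot X$, and the strong connectivity of the directed graph $D$ on $\binom{[n]}{k}$ whose edges correspond to increasing a single index by $1$ modulo $n$. Replacing $\shift$ with $\shift^T$ turns ``increase by $1$'' into ``decrease by $1$ modulo $n$'', so the relevant graph becomes $D^{\op}$, which is equally strongly connected (reversing each of the paths constructed in the proof of \cref{exp(ts)} gives the required paths in $D^{\op}$). Every other step transfers without change, yielding $\exp(t\shift^T)\cdot X\in\Grtp$ for all $X\in\Grtnn$ and $t>0$.

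Second, I would chain the two statements. Given $X \in \Grtnn$ and $t>0$, apply the $\shift^T$-version of \cref{exp(ts)} to obtain $Y := \exp(t\shift^T)\cdot X \in \Grtp \subset \Grtnn$; then apply \cref{exp(ts)} itself to $Y$ to conclude $\exp(t\shift)\cdot Y \in \Grtp$. By the factorization above, $\exp(t\shift)\cdot Y = \expttau\cdot X$, which is precisely the desired statement.

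There is no serious obstacle here: the substantive work is already contained in \cref{exp(ts)}, and the corollary is really a repackaging of it. The only point needing attention is the routine verification that the proof of \cref{exp(ts)} transfers to $\shift^T$ via the opposite-graph remark, rather than attempting a more elaborate conjugation or symmetry-based reduction.
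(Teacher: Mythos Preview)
Your proposal is correct and matches the paper's own argument essentially verbatim: the paper also notes that $\shift$ and $\shift^T$ commute, factors $\expttau = \exp(t\shift)\exp(t\shift^T)$, invokes the $\shift^T$-analogue of \cref{exp(ts)} ``by symmetry'', and chains the two applications. Your elaboration on why the symmetry goes through (via the opposite graph $D^{\op}$) is a welcome unpacking of what the paper leaves as a one-word remark.
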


Let us see how $\expttau$ acts on matrices $A\in\mat$. Note that $\tau(\eigtau_i) = \l_i \eigtau_i$ for $1\leq i\leq n$, so $\expttau(\eigtau_i) = e^{t\l_i}\eigtau_i$. Therefore $\expttau$ acts on the basis of $\phi(A)$ in~\eqref{not:embedding} by
\[
\expttau(\eigtau_i + \textstyle\sum_{j=1}^{n-k}A_{i,j}\eigtau_{k+j}) =  e^{t\l_i}(\eigtau_i + \textstyle\sum_{j=1}^{n-k} e^{t(\l_{k+j}-\l_i)}A_{i,j}\eigtau_{k+j})
\]
for all $1\leq i\leq k$. Thus $\expttau\cdot\phi(A) = \phi(\act t A)$, where by definition $\act t A\in\mat$ is the matrix with entries
\begin{align}\label{eq:expts_dfn}
(\act t A)_{i,j} := e^{t(\l_{k+j}-\l_i)}A_{i,j}\quad\text{ for $1\leq i\leq k$ and $1\leq j\leq n-k$}.
\end{align}

\subsection{Proof of \texorpdfstring{\cref{thm:Gr}}{Theorem 1.1}}\label{sec:proof}
Consider the map $\flow:\RR\times \mat\to\mat$ defined by~\eqref{eq:expts_dfn}. We claim that $\flow$ is a contractive flow on $\mat$ equipped with the Euclidean norm
\[
\|A\|^2 = \sum_{i=1}^k \sum_{j=1}^{n-k}A_{i,j}^2.
\]
Indeed, parts~\eqref{item:continuous} and~\eqref{item:action} of \cref{dfn:contract} hold for $\flow$. To see that part~\eqref{eq:contract} holds, note that for any $1 \le i \le k$ and $1 \le j \le n-k$ with $A_{i,j}\neq 0$, we have
\[
|(\act t A)_{i,j}| = |e^{t(\l_{k+j}-\l_i)}A_{i,j}| = e^{t(\l_{k+j}-\l_i)}|A_{i,j}| < |A_{i,j}| \quad \text{ for $t > 0$},
\]
using the fact that $\l_i \ge \l_k > \l_{k+1} \ge \l_{k+j}$ from \cref{tau_properties}\eqref{tau_eigenvalues}. Therefore $\|\act t A\| < \|A\|$ if $A\neq 0$, verifying part~\eqref{eq:contract}.

Let us now apply \cref{lem:top} with $\RR^N=\mat$ and $\openball=\phi^{-1}(\Grtp)$. We need to know that $\Grtnn$ is the closure of $\Grtp$. This was proved by Postnikov~\cite[Section 17]{Pos}; it also follows directly from \cref{cor:expttau_preserves_Grtp}, since we can express any $X\in\Grtnn$ as a limit of totally positive subspaces:
\[
X = \lim_{t\to 0+}\expttau\cdot X.
\]
Therefore $\closedball=\phi^{-1}(\Grtnn)$. Moreover, $\Grtnn$ is closed inside the compact space $\RP^{\binom{n}{k}-1}$, and is therefore also compact.\label{closure} So, $\overline{Q}$ is compact (and hence bounded). Finally, the property~\eqref{eq:invariant} in this case is precisely \cref{cor:expttau_preserves_Grtp}. We have verified all the hypotheses of \cref{lem:top}, and conclude that $\overline{Q}$ (and also $\Grtnn$) is homeomorphic to a $k(n-k)$-dimensional closed ball.\hfill\qedsymbol

\subsection{Related work}\label{sec:remarks}

Lusztig~\cite[Section 4]{LusIntro} used a flow similar to $\expttau$ to show that $(G/P)_{\geq 0}$ is contractible. Our flow can be thought of as an affine (or loop group) analogue of his flow, and is closely related to the whirl matrices of~\cite{LP1}. We also remark that Ayala, Kliemann, and San Martin~\cite{AKSM} used the language of control theory to give an alternative development in type $A$ of Lusztig's theory of total positivity. In that context, $\exp(t\tau)$ ($t > 0$) lies in the interior of the {\itshape compression semigroup} of $\Grtnn$, and $X_0$ is its {\itshape attractor}.

Marsh and Rietsch defined and studied a {\itshape superpotential} on the Grassmannian in the context of mirror symmetry \cite[Section 6]{marsh_rietsch}. It follows from results of Rietsch~\cite{rietsch_08} (see~\cite[Corollary 3.12]{karp_cyclic_shift}) that $X_0$ is, rather surprisingly, also the unique totally nonnegative critical point of the $q=1$ specialization of the superpotential. However, the superpotential is not defined on the boundary of $\Grtnn$. The precise relationship between $\tau$ and the gradient flow of the superpotential remains mysterious.

\subsection{Example: the case \texorpdfstring{$\Gr(2,4)$}{Gr(2,4)}}\label{eg_Gr24}

The matrix $\tau = S + S^T\in\gl_4(\RR)$ and an orthogonal basis of real eigenvectors $u_1, u_2, u_3, u_4$ are
\begin{align*}
\tau = \begin{bmatrix}
0 & 1 & 0 & -1 \\
1 & 0 & 1 & 0 \\
0 & 1 & 0 & 1 \\
-1 & 0 & 1 & 0
\end{bmatrix},\qquad
\begin{aligned}
& u_1 = (0, 1, \sqrt{2}, 1),\quad && \lambda_1 = \sqrt{2}, \\
& u_2 = (-\sqrt{2}, -1, 0, 1),\quad && \lambda_2 = \sqrt{2}, \\
& u_3 = (\sqrt{2}, -1, 0, 1),\quad && \lambda_3 = -\sqrt{2}, \\
& u_4 = (0, 1, -\sqrt{2}, 1),\quad && \lambda_4 = -\sqrt{2}.
\end{aligned}
\end{align*}
The embedding $\phi:\Mat(2,2)\hookrightarrow\Gr(2,4)$ sends the matrix $A = \scalebox{0.86}{$\begin{bmatrix}a & b \\ c & d\end{bmatrix}$}$ to
\begin{align*}
\phi(A) = X = \begin{bmatrix}
u_1 + au_3 + bu_4 \\
u_2 + cu_3 + du_4
\end{bmatrix} = 
\begin{bmatrix}\sqrt{2}\hspace*{1.2pt}a & 1-a+b & \sqrt{2}-\sqrt{2}\hspace*{1.2pt}b & 1+a+b \\
-\sqrt{2} + \sqrt{2}\hspace*{1.2pt}c & -1-c+d & -\sqrt{2}\hspace*{1.2pt}d & 1+c+d
\end{bmatrix}.
\end{align*}
Above we are identifying $X\in\Gr(2,4)$ with a $2\times 4$ matrix whose rows form a basis of $X$. In terms of Pl\"{u}cker coordinates $\Delta_{ij} = \Delta_{\{i,j\}}(X)$, the map $\phi$ is given by
\begin{align}\label{Gr24_Plueckers}
\begin{aligned}
\Delta_{12} &= \sqrt{2}(1 - 2a + b - c + ad - bc), \\[-2pt]
\Delta_{23} &= \sqrt{2}(1 - 2d - b + c + ad - bc), \\[-2pt]
\Delta_{34} &= \sqrt{2}(1 + 2d - b + c + ad - bc), \\[-2pt]
\Delta_{14} &= \sqrt{2}(1 + 2a + b - c + ad - bc),
\end{aligned}\qquad
\begin{aligned}
\Delta_{13} &= 2(1 - b - c - ad + bc), \\[-2pt]
\Delta_{24} &= 2(1 + b + c - ad + bc),
\end{aligned}
\end{align}
and its inverse is given by
\begin{gather*}
\begin{aligned}
&a = (2\Delta_{14} - 2\Delta_{12})/\delta,\quad && b = (\Delta_{12} - \Delta_{23} - \Delta_{34} + \Delta_{14} - \sqrt{2}\hspace*{1.2pt}\Delta_{13} + \sqrt{2}\hspace*{1.2pt}\Delta_{24})/\delta, \\
&d = (2\Delta_{34} - 2\Delta_{23})/\delta,\quad && c = (-\Delta_{12} + \Delta_{23} + \Delta_{34} - \Delta_{14} - \sqrt{2}\hspace*{1.2pt}\Delta_{13} + \sqrt{2}\hspace*{1.2pt}\Delta_{24})/\delta,
\end{aligned}\\
\text{where }\;\delta = \Delta_{12} + \Delta_{23} + \Delta_{34} + \Delta_{14} + \sqrt{2}\hspace*{1.2pt}\Delta_{13} + \sqrt{2}\hspace*{1.2pt}\Delta_{24}.\quad
\end{gather*}

The point $X_0 = \phi(0) = \spn(u_1, u_2)\in\Gr_{>0}(2,4)$ has Pl\"{u}cker coordinates
\[
\Delta_{12} = \Delta_{23} = \Delta_{34} = \Delta_{14} = \sqrt{2},\quad \Delta_{13} = \Delta_{24} = 2,
\]
which agrees with \cref{tau_properties}\eqref{prop:X_0}. The image of $\phi$ is the subset of $\Gr(2,4)$ where $\delta\neq 0$, which we see includes $\Gr_{\ge 0}(2,4)$, verifying \cref{embedding_properties} in this case. Restricting $\phi^{-1}$ to $\Gr_{\ge 0}(2,4)$ gives a homeomorphism onto the subset of $\mathbb{R}^4$ of points $(a,b,c,d)$ where the $6$ polynomials $\Delta_{ij}$ in~\eqref{Gr24_Plueckers} are nonnegative. By \cref{thm:Gr}, these spaces are both homeomorphic to $4$-dimensional closed balls. The closures of cells in the cell decomposition of $\Gr_{\ge 0}(2,4)$ are obtained in $\RR^4$ by taking an intersection with the zero locus of some subset of the $6$ polynomials. The $0$-dimensional cells (corresponding to points of $\Gr_{\ge 0}(2,4)$ with only one nonzero Pl\"{u}cker coordinate) are
\[
(a,b,c,d)=(-2,1,-1,0),\hspace*{2pt} (0,-1,1,-2),\hspace*{2pt} (0,-1,1,2),\hspace*{2pt} (2,1,-1,0),\hspace*{2pt} (0,-1,-1,0),\hspace*{2pt} (0,1,1,0).
\]
In general, using the embedding $\phi$ we can describe $\Gr_{\ge 0}(k,n)$ as the subset of $\mathbb{R}^{k(n-k)}$ where some $\binom{n}{k}$ polynomials of degree at most $k$ are nonnegative.

\section{The totally nonnegative part of the unipotent radical of \texorpdfstring{$\GL_n(\RR)$}{GL(n)}}\label{sec:U}

\noindent Recall from \cref{intro:U} that $U$ denotes the unipotent group of upper-triangular matrices in $\GL_n(\RR)$ with $1$'s on the diagonal, and $U_{\ge 0}$ is its totally nonnegative part, where all minors are nonnegative. We also let $V\subset U$ be the set of $x\in U$ whose superdiagonal entries $x_{i,i+1}$ sum to $n-1$, and define $V_{\ge 0} := V\cap U_{\ge 0}$. We may identify $V_{\ge 0}$ with the link of the identity matrix $1$ in $U_{\ge 0}$. In this section, we prove the following result. It is a special case of a result of Hersh~\cite{Hersh}, who established the corresponding result in general Lie type, and in addition for all the lower-dimensional cells in the Bruhat stratification.
\begin{thm}[\cite{Hersh}]\label{thm:U}
The space $V_{\geq 0}$ is homeomorphic to an $\left(\binom{n}{2}-1\right)$-dimensional closed ball. The space $U_{\geq 0}$ is homeomorphic to a closed half-space in $\RR^{\binom{n}{2}}$.
\end{thm}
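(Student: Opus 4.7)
The plan is to apply Lemma~\ref{lem:top} to $V_{\ge 0}$, after choosing coordinates on $V$ centered at a judiciously chosen point of $V_{>0}$. The half-space statement for $U_{\ge 0}$ will then follow by realizing $U_{\ge 0}$ as a cone over $V_{\ge 0}$.

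For the center, I would take $x_0 := \exp(e) \in V_{>0}$, where $e := E_{1,2}+E_{2,3}+\cdots+E_{n-1,n}$ is the principal nilpotent; its entries $(x_0)_{ij} = 1/(j-i)!$ satisfy $\sum_i (x_0)_{i,i+1} = n-1$, and the classical fact that $\exp(se)$ is totally positive in $U$ for every $s > 0$ shows $x_0 \in V_{>0}$. Let $W := \{w \in U : \sum_i w_{i,i+1} = 0\}$, a linear subspace of $U \cong \RR^{\binom{n}{2}}$ of dimension $\binom{n}{2}-1$. Since $(x_0 w)_{i,i+1} = 1 + w_{i,i+1}$, left multiplication by $x_0$ gives a diffeomorphism $\phi: W \to V$, $w \mapsto x_0 w$. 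Set $h := \diag(0,1,2,\ldots,n-1)$ and $d(t) := \exp(th)$, and define the flow on $W$ by
\[
\flow_t(w) := d(t)\,w\,d(t)^{-1},
\]
which acts on matrix entries by $w_{ij} \mapsto e^{(i-j)t} w_{ij}$. This preserves $W$ (all superdiagonal entries rescale by the common factor $e^{-t}$), is linear, and has all eigenvalues strictly less than $1$ for $t > 0$; hence it is a contractive flow on $W \cong \RR^{\binom{n}{2}-1}$ with respect to the induced Euclidean norm.

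The key step is to show that $\phi \circ \flow_t \circ \phi^{-1}$ sends $V_{\ge 0}$ into $V_{>0}$ for every $t > 0$. A direct computation using the commutation relation $[h,e] = -e$, which gives $\operatorname{Ad}(d(t))(e) = e^{-t} e$, yields
\[
\phi\bigl(\flow_t(\phi^{-1}(v))\bigr) \;=\; x_0\, d(t)\, x_0^{-1}\, v\, d(t)^{-1} \;=\; \exp\bigl((1-e^{-t})\,e\bigr) \cdot \bigl(d(t)\,v\,d(t)^{-1}\bigr).
\]
For $v \in V_{\ge 0}$ and $t > 0$, the right-hand factor lies in $U_{\ge 0}$ because conjugation by the positive diagonal $d(t)$ preserves total nonnegativity, and the left-hand factor lies in $U_{>0}$ since $1-e^{-t} > 0$. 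Applying the Cauchy--Binet identity $\Delta_{I,J}(AB) = \sum_K \Delta_{I,K}(A)\,\Delta_{K,J}(B)$ with the choice $K = J$ (and using $\Delta_{J,J}(B)=1$ for $B$ upper unipotent) gives the semigroup inclusion $U_{>0} \cdot U_{\ge 0} \subset U_{>0}$, so the product lies in $U_{>0} \cap V = V_{>0}$, as required.

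Compactness of $V_{\ge 0}$ follows by induction on $j-i$ from the constraint $\sum x_{i,i+1} = n-1$ together with the $2\times 2$ minor inequalities $x_{i,j} \le x_{i,j-1}\,x_{j-1,j}$, which bound each higher-order entry in terms of the superdiagonals. Density of $V_{>0}$ in $V_{\ge 0}$ follows from the displayed identity by letting $t \to 0^+$. Lemma~\ref{lem:top}, applied with $\openball := \phi^{-1}(V_{>0})$ and $\closedball = \phi^{-1}(V_{\ge 0})$, then yields that $V_{\ge 0}$ is homeomorphic to a closed ball of dimension $\binom{n}{2}-1$. For the statement about $U_{\ge 0}$, I would define $\sigma: V_{\ge 0} \times [0,\infty) \to U_{\ge 0}$ by $\sigma(v,r)_{ij} := r^{j-i}\,v_{ij}$; for $r > 0$ this is the conjugate of $v$ by $\diag(1, r^{-1}, \ldots, r^{-(n-1)})$ and hence lies in $U_{\ge 0}$, while $\sigma(v,0) = 1$. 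A short argument shows that $\sigma$ is a continuous proper surjection, injective on $V_{\ge 0} \times (0,\infty)$ and collapsing $V_{\ge 0} \times \{0\}$ to $1$; it therefore realizes $U_{\ge 0}$ as the solid cone on the closed ball $V_{\ge 0}$, which is homeomorphic to a closed half-space in $\RR^{\binom{n}{2}}$. The main obstacle is the positivity-improving identity in the third paragraph: it combines the Lie-algebraic input $[h,e]=-e$ with the semigroup inclusion $U_{>0} \cdot U_{\ge 0} \subset U_{>0}$, and almost all the substantive work sits there.
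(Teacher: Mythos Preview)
Your proof is correct, and in fact you are using the \emph{same} flow as the paper: substituting $t = e^s$ into the paper's action $a(t)\cdot x = \rho(1/t)\exp((t-1)e)\,x\,\rho(t)$ and using $\rho(e^s) = e^{s(n-1)}d(s)^{-1}$ together with $d(s)\,e\,d(s)^{-1} = e^{-s}e$ gives exactly your formula $\exp((1-e^{-s})e)\,d(s)\,v\,d(s)^{-1}$. The positivity step (Cauchy--Binet plus $\exp(se)\in U_{>0}$), the boundedness argument, and the cone construction for $U_{\ge 0}$ are all essentially identical to the paper's.

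Where you genuinely improve on the paper is in the choice of coordinates for the contractivity check. The paper works in the coordinates $b_{i,j}(x) = c^{i-j}((j-i)!\,x_{i,j}-1)$, in which the flow is \emph{not} diagonal; this forces an $L^\infty$-norm, an auxiliary constant $c>1$, and a first-order Taylor computation comparing $b_{i,j}$ to $b_{i+1,j}/c$. By instead left-translating via $\phi^{-1}(v) = x_0^{-1}v$, you land in coordinates where the flow is the diagonal linear map $w_{ij}\mapsto e^{(i-j)t}w_{ij}$, so contractivity in the Euclidean norm is immediate. This is a cleaner packaging of the same idea; the trade-off is that the paper's coordinates are explicit polynomial functions of the entries of $x$, whereas yours implicitly involve the inverse $x_0^{-1}=\exp(-e)$. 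One small notational wrinkle: your $W$ as a set of unipotent matrices is only a linear subspace after identifying $U\cong\RR^{\binom{n}{2}}$ via the strictly upper-triangular entries (sending $1\in U$ to $0$); it would be worth making that identification explicit before calling $f_t$ linear.
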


Let $e\in \gl_n(\RR)$ be the upper-triangular principal nilpotent element, which has $1$'s on the superdiagonal and $0$'s elsewhere. We wish to consider the flow on $V_{\geq 0}$ generated by $\exp(te)$, which we remark was used by Lusztig to show that $U_{\geq 0}$ is contractible~\cite[Section~4]{LusIntro}. However, we must take care to define a flow which preserves $V$ and not merely $U$. To this end, for $t > 0$, let $\rho(t)\in\GL_n(\RR)$ be the diagonal matrix with diagonal entries $(t^{n-1},t^{n-2},\dots,1)$. Note that $\rho$ is multiplicative, i.e., $\rho(s)\rho(t) = \rho(st)$. Define $a(t): U\to U$ by
\begin{align}\label{eq:a(t)}
a(t)\cdot x :=  \rho(1/t)\exp((t-1)e)x \rho(t).
\end{align}

\begin{lem}\label{multiplicative_group}
The map $a(\cdot)$ defines an action of the multiplicative group $\mathbb{R}_{>0}$ on $V$, i.e.,
\[
a(t)\cdot x\in V,\;\; a(1)\cdot x = x,\; \text{ and } \; a(s)\cdot (a(t)\cdot x) = a(st)\cdot x\quad \text{ for all $s,t > 0$ and $x\in V$}.
\]
\end{lem}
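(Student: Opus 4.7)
The plan is to reduce all three assertions to the single conjugation identity
$$\rho(t)\, e\, \rho(1/t) = t\, e,$$
which holds because $\rho(t)$ is diagonal with entries $(\rho(t))_{ii} = t^{n-i}$, so $(\rho(t) e \rho(1/t))_{ij} = t^{n-i} e_{ij} t^{-(n-j)} = t^{j-i} e_{ij}$, and $e_{ij}$ is nonzero only when $j = i+1$. Exponentiating termwise yields
$$\rho(t) \exp(s e) \rho(1/t) = \exp(st\, e), \qquad \rho(1/t) \exp(s e) \rho(t) = \exp((s/t) e),$$
for all $s,t \in \RR$ with $t>0$. This is the only real computation; everything else is bookkeeping.

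First I would check $a(1) \cdot x = x$, which is immediate: $\rho(1)$ is the identity and $\exp(0) = 1$. Next, I would verify the cocycle property. Writing out
$$a(s)\cdot(a(t)\cdot x) = \rho(1/s)\exp((s-1)e)\,\rho(1/t)\exp((t-1)e)\,x\,\rho(t)\rho(s),$$
I would push $\rho(1/t)$ to the left past $\exp((s-1)e)$ using the conjugation identity: $\exp((s-1)e)\rho(1/t) = \rho(1/t)\exp((s-1)t\, e)$. Combined with $\rho(1/s)\rho(1/t) = \rho(1/(st))$ and $\rho(t)\rho(s) = \rho(st)$, this collapses to
$$\rho(1/(st)) \exp((st-1) e)\, x\, \rho(st) = a(st)\cdot x.$$

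For the membership $a(t)\cdot x \in V$, I would argue in two steps. First, conjugation of any $y \in U$ by the diagonal matrix $\rho(t)$ preserves upper triangularity and fixes diagonal entries (in fact $(\rho(1/t) y \rho(t))_{ij} = t^{i-j} y_{ij}$, so all diagonal entries are $1$), and $\exp((t-1)e) \cdot x$ is a product of upper-triangular unipotent matrices, hence lies in $U$; therefore $a(t)\cdot x \in U$. Second, I would compute the superdiagonal trace. The superdiagonal entry of $\exp((t-1)e)\,x$ at position $(i,i+1)$ is $x_{i,i+1} + (t-1)$, and applying the conjugation formula once more gives
$$(a(t)\cdot x)_{i,i+1} = t^{-1}\bigl(x_{i,i+1} + (t-1)\bigr).$$
Summing over $i = 1,\dots,n-1$ and using $\sum_i x_{i,i+1} = n-1$ (since $x \in V$) yields
$$\sum_{i=1}^{n-1}(a(t)\cdot x)_{i,i+1} = t^{-1}\bigl[(n-1) + (n-1)(t-1)\bigr] = n-1,$$
so $a(t)\cdot x \in V$. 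No step presents a genuine obstacle; the conjugation identity is simply the statement that $e$ has weight one under the rescaling $\rho$, and the factor $t^{-1}$ from the diagonal conjugation exactly cancels the factor of $t$ produced by adding $(t-1)$ to each of the $n-1$ superdiagonal entries.
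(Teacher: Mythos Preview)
Your proof is correct. The paper's argument is essentially computational: it records the entry formula
\[
(\rho(1/t)\exp((s-1)e)x\rho(t))_{i,j}=\frac{1}{t^{j-i}}\sum_{l=i}^{j}\frac{(s-1)^{l-i}}{(l-i)!}\,x_{l,j}
\]
and declares that all three properties can be checked directly from it. You instead isolate the structural fact $\rho(t)\,e\,\rho(1/t)=t\,e$ and use it to handle the composition law $a(s)\circ a(t)=a(st)$ at the level of matrix identities, combining $\exp((s-1)te)\exp((t-1)e)=\exp((st-1)e)$ with the multiplicativity of $\rho$. This is cleaner and more conceptual for the group-action part; the paper's formula, on the other hand, is reused in the proof of the subsequent lemma, which is presumably why the authors chose to write it down. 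For the membership $a(t)\cdot x\in V$, your superdiagonal computation is exactly the $j=i+1$ case of the paper's formula, so the two approaches coincide there.
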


\begin{proof}
This can be verified directly, using the fact that for $s,t > 0$, $x\in U$, and $1 \le i,j \le n$,
\begin{equation}\label{eq:a(t)_entries}
(\rho(1/t)\exp((s-1)e)x \rho(t))_{i,j} = t^{i-n}t^{n-j}(\exp((s-1)e)x)_{i,j} = \frac{1}{t^{j-i}}\sum_{l=i}^j \frac{(s-1)^{l-i}}{(l-i)!}x_{l,j}.\hspace*{-8pt}\qedhere
\end{equation}
\end{proof}

We now introduce coordinates on $V$ centered at $\exp(e)$. Namely, for $x\in V$, define
\begin{align}\label{exp(e)_coordinates}
b_{i,j}(x) := c^{i-j}((j-i)!x_{i,j} - 1) \quad \text{ for }1 \le i < j \le n,
\end{align}
where $c>1$ is a fixed real number. Note that $b_{1,2}+b_{2,3}+\dots+b_{n-1,n}=0$. We use the $L^\infty$-norm $\|x\|_\infty := \max_{1 \le i < j \le n}|b_{i,j}(x)|$ in the $b_{i,j}$-coordinates. We have $\|\!\exp(e)\|_\infty = 0$. We also define the {\itshape totally positive part} $U_{>0}$ as the set of $x\in U_{\ge 0}$ such that every minor of the form $\det(x_{\{i_1 < \dots < i_k\}, \{j_1 < \dots < j_k\}})$ with $j_1 \ge i_1, \dots, j_k\ge i_k$ is nonzero, and let $V_{>0} := V\cap U_{>0}$.
\begin{lem}\label{lem:at}
Let $x\in V$.
\begin{enumerate}[(i)] 
\item\label{at1}
If $x\in V_{\geq 0}$ and $t > 1$, then $a(t) \cdot x \in V_{>0}$.
\item\label{at2}
If $x\neq\exp(e)$, then $t\mapsto \|a(t)\cdot x\|_\infty$ is a strictly decreasing function on $(0,\infty)$.
\end{enumerate}
\end{lem}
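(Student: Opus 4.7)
The plan is to treat the two parts of the lemma separately. For part~\eqref{at1}, I would first observe that left- and right-multiplication by the diagonal matrices $\rho(1/t)$ and $\rho(t)$ scales each minor $\det(y_{I,J})$ by the positive factor $\prod_{i\in I} t^{i-n}\prod_{j\in J}t^{n-j}$, and so preserves both the signs and the vanishing pattern of every minor. It therefore suffices to show $\exp((t-1)e)\,x\in U_{>0}$ for $t>1$ and $x\in V_{\geq 0}\subset U_{\geq 0}$. The matrix $\exp(se)$ has $(i,j)$-entry $s^{j-i}/(j-i)!$ for $j\geq i$; factoring powers of $s$ out of rows and columns reduces an essential minor of $\exp(se)$ to a binomial-coefficient determinant, positive by Lindström--Gessel--Viennot, so $\exp(se)\in U_{>0}$ for all $s>0$. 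Cauchy--Binet then gives
\[
\det\bigl((\exp((t-1)e)\,x)_{I,J}\bigr) = \sum_{K}\det(\exp((t-1)e)_{I,K})\,\det(x_{K,J}),
\]
a sum of nonnegative terms (every minor of an element of $U_{\geq 0}$ is $\geq 0$) whose $K=J$ summand is the strictly positive $\det(\exp((t-1)e)_{I,J})$.

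For part~\eqref{at2}, the central step is to express the $b$-coordinates of $a(t)\cdot x$ as a weighted sum of those of $x$. Starting from the entry formula $(a(t)\cdot x)_{i,j}=\frac{1}{t^{j-i}}\sum_{l=i}^{j}\frac{(t-1)^{l-i}}{(l-i)!}x_{l,j}$ established in the proof of \cref{multiplicative_group}, substituting $(j-l)!\,x_{l,j}=1+c^{j-l}b_{l,j}(x)$ with the convention $b_{l,l}(x):=0$, and using the binomial identity $\sum_{m=0}^{N}\binom{N}{m}(t-1)^{m}=t^{N}$ to cancel the constant term, I expect to obtain (with $N:=j-i$)
\[
b_{i,j}(a(t)\cdot x) = \frac{1}{t^{N}}\sum_{m=0}^{N}\binom{N}{m}\Bigl(\tfrac{t-1}{c}\Bigr)^{\!m}b_{i+m,j}(x).
\]
For $t>1$ and $c>1$ every coefficient is nonnegative, and their total equals $\bigl((c+t-1)/(tc)\bigr)^{N}$. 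The identity $c+t-1-tc=-(t-1)(c-1)<0$ shows this quantity is strictly less than $1$, and for $N\geq 1$ is at most $(c+t-1)/(tc)$. The triangle inequality then yields $\|a(t)\cdot x\|_\infty\leq\frac{c+t-1}{tc}\|x\|_\infty<\|x\|_\infty$ whenever $x\neq\exp(e)$.

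To upgrade from $t>1$ to strict monotonicity on all of $(0,\infty)$, I would use the group property in \cref{multiplicative_group}: given $0<r<s$, set $t:=s/r>1$ and $y:=a(r)\cdot x$. Since $\exp(e)$ is a fixed point of every $a(t)$ (a brief computation using the binomial theorem shows $(a(t)\cdot\exp(e))_{i,j}=1/(j-i)!$), and $a(r)$ is a bijection on $V$, one has $y\neq\exp(e)$, so
\[
\|a(s)\cdot x\|_\infty = \|a(t)\cdot y\|_\infty < \|y\|_\infty = \|a(r)\cdot x\|_\infty.
\]
The main obstacle, and the key new input, is discovering the coordinates $b_{i,j}$: the exponent $c^{i-j}$ (with $c>1$) is calibrated so that the three factors in $a(t)=\rho(1/t)\exp((t-1)e)(\,\cdot\,)\rho(t)$ assemble into a binomial sum whose coefficients total $(c+t-1)/(tc)<1$. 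Without this specific weighting, the same computation only yields an equality, and no contraction.
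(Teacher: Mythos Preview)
Your proof is correct. Part~\eqref{at1} matches the paper's argument essentially verbatim: reduce to $\exp((t-1)e)\,x\in U_{>0}$ and invoke Cauchy--Binet together with $\exp(se)\in U_{>0}$.

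For part~\eqref{at2} you take a genuinely different route from the paper. The paper works infinitesimally: it expands $b_{i,j}(a(t)\cdot x)$ to first order in $(t-1)$, checks that each coordinate achieving the maximum has $|b_{i,j}(a(t)\cdot x)|$ strictly decreasing at $t=1$, and then appeals to multiplicativity to propagate this local statement to all of $(0,\infty)$. You instead derive the exact closed-form identity
\[
b_{i,j}(a(t)\cdot x)=\frac{1}{t^{N}}\sum_{m=0}^{N}\binom{N}{m}\Bigl(\tfrac{t-1}{c}\Bigr)^{m}b_{i+m,j}(x),\qquad N=j-i,
\]
read off that for $t>1$ the coefficients are nonnegative with sum $\bigl(\tfrac{c+t-1}{tc}\bigr)^{N}\le \tfrac{c+t-1}{tc}<1$, and obtain the global quantitative contraction $\|a(t)\cdot x\|_\infty\le\tfrac{c+t-1}{tc}\,\|x\|_\infty$ for all $t>1$ in one stroke. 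This is cleaner: it sidesteps the (mild) subtlety that the $L^\infty$-norm is a maximum of several smooth functions, and it gives an explicit contraction rate. The paper's derivative argument, on the other hand, makes it transparent \emph{why} the weight $c^{i-j}$ with any $c>1$ works (the single condition $|b_{i+1,j}/c|<|b_{i,j}|$ at a maximizer suffices), whereas your binomial computation hides this behind the algebraic identity $(t-1)(c-1)>0$. Both approaches use multiplicativity of $a(\cdot)$ in the same way to pass from ``$t>1$'' to ``strictly decreasing on $(0,\infty)$''.
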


\begin{proof}
To prove part~\eqref{at1}, we must show that $\exp((t-1)e)x\in U_{>0}$ for $x\in V_{\ge 0}$ and $t > 1$. This follows by writing the relevant minors of $\exp((t-1)e)x$ via the Cauchy--Binet identity, using the fact that $\exp((t-1)e)\in U_{>0}$ (see~\cite[Proposition~5.9]{Lus2}). For part~\eqref{at2}, because $a(\cdot)$ is multiplicative (\cref{multiplicative_group}) it suffices to prove that $t\mapsto\|a(t)\cdot x\|_\infty$ is decreasing at $t=1$. Using the description of the entries of $a(t)$ given by setting $s=t$ in~\eqref{eq:a(t)_entries}, we get
\[
b_{i,j}(a(t)\cdot x) = b_{i,j}(x) + (t-1)(j-i)\left(\frac{b_{i+1,j}(x)}{c} - b_{i,j}(x)\right) + O((t-1)^2)
\]
as $t\to 1$, where we set $b_{i+1,j}(x) := 0$ if $i+1 = j$. Then for any $i < j$ with $|b_{i,j}(x)| = \|x\|_\infty$, we have $|b_{i+1,j}(x)/c| < |b_{i,j}(x)|$, and so $|b_{i,j}(a(t)\cdot x)|$ is decreasing at $t=1$.
\end{proof}

\begin{proof}[Proof of \cref{thm:U}]
Set $N:=\binom{n}{2}-1$, and consider a real $N$-dimensional vector space
\[W:=\{(w_{i,j})_{1\leq i<j\leq n} : w_{1,2}+\dots+w_{n-1,n}=0\}\]
equipped with the $L^\infty$-norm.
We have a diffeomorphism $b:V\to W$ defined by $b(x):=(b_{i,j}(x))_{1\leq i<j\leq n}$. Define the continuous map $\flow:\RR\times W\to W$ by setting $\act{t}{b(x)}:=b(a(\exp(t))\cdot x)$. Then \cref{multiplicative_group} and \cref{lem:at}\eqref{at2} imply that $\flow$ is a contractive flow.

Let us now show that the hypotheses of \cref{lem:top} hold, with $\RR^N = W$ and $\openball = b(V_{>0})$. \cref{lem:at}\eqref{at1} implies that $\closedball = b(V_{\ge 0})$, and also verifies~\eqref{eq:invariant}. Finally, $\openball$ is bounded because $V_{\ge0}$ is bounded, e.g., one can prove by induction on $j-i$ that for any $x\in V_{\ge 0}$,
\[
0 \le x_{i,j} \le (n-1)^{j-i} \quad \text{ for } 1 \le i < j \le n.
\]
(Alternatively, see~\cite[proof of Proposition~4.2]{Lus2}.) Thus \cref{lem:top} implies that $\closedball$ (and hence $V_{\ge 0}$) is homeomorphic to an $N$-dimensional closed ball.

For $U_{\ge 0}$, we use the dilation action of $\RR_{>0}$ on $U_{\geq 0}$, where $t \in \RR_{>0}$ acts by multiplying all entries $x_{i,j}$ on the $(j-i)$-th diagonal (above the main diagonal) by $t^{j-i}$. Therefore $U_{\geq 0}$ is homeomorphic to the open cone over the compact space $V_{\geq 0}$. That is, $U_{\geq0}$ is homeomorphic to the quotient space of $\RR_{\geq 0} \times V_{\geq 0}$ by the subspace $0 \times V_{\geq 0}$, with the identity matrix $1 \in U_{\geq 0}$ corresponding to the cone point.
\end{proof}

\begin{eg}
Let $n=3$. The trajectory in $U$ beginning at the point $x = \scalebox{0.75}{$\begin{bmatrix}1 & p & q \\ 0 & 1 & r \\ 0 & 0 & 1\end{bmatrix}$}\in U$ is
\[
a(t)\cdot x = \begin{bmatrix}
1 & (t+p-1)/t & (t^2 + (2r-2)t + 2q - 2r + 1)/2t^2 \\
0 & 1 & (t+r-1)/t \\
0 & 0 & 1
\end{bmatrix},
\]
which converges to $\exp(e) = \scalebox{0.75}{$\begin{bmatrix}1 & 1 & 1/2 \\ 0 & 1 & 1 \\ 0 & 0 & 1\end{bmatrix}$}$ as $t\to\infty$. The coordinates $b_{i,j}$ from~\eqref{exp(e)_coordinates} are
\[
b_{1,2}(a(t)\cdot x) = \frac{p-1}{ct},\quad b_{2,3}(a(t)\cdot x) = \frac{r-1}{ct},\quad b_{1,3}(a(t)\cdot x) = \frac{(2r-2)t + 2q-2r+1}{c^2t^2}.
\]
We can then try to verify \cref{lem:at} directly in this case (this is a nontrivial exercise).
\end{eg}

\section{The \cs amplituhedron}\label{sec:amplituhedron}

\noindent Let $k,m,n$ be nonnegative integers with $k+m\le n$ and $m$ even, and let $\shift,\tau\in\gl_n(\RR)$ be the operators from \cref{sec:glob-coord}. Let $\l_1\geq\dots\geq\l_n\in\RR$ be the eigenvalues of $\tau$ corresponding to orthogonal eigenvectors $\eigtau_1,\dots,\eigtau_n$. In this section, we assume that these eigenvectors have norm $1$. Recall from \cref{tau_properties}\eqref{tau_eigenvalues} that $\l_k>\l_{k+1}$. Since $m$ is even, we have $(-1)^{k+m-1} = (-1)^{k-1}$ and $\l_{k+m}>\l_{k+m+1}$.

Let $\Zsym$ denote the $(k+m)\times n$ matrix whose rows are $\eigtau_1, \dots, \eigtau_{k+m}$. By \cref{tau_properties}\eqref{prop:X_0}, the $(k+m)\times (k+m)$ minors of $\Zsym$ are all positive (perhaps after replacing $\eigtau_1$ with $-\eigtau_1$). We may also think of $\Zsym$ as a linear map $\RR^n\to\RR^{k+m}$. Since the vectors $u_1, \dots, u_n$ are orthonormal, this map takes $\eigtau_i$ to the $i$th unit vector $e_i\in\RR^{k+m}$ if $i\le k+m$, and to $0$ if $i > k+m$. Recall from \cref{intro:amplituhedron} that $\Zsym$ induces a map $\ZsymGr : \Grtnn\to\Gr(k,k+m)$, whose image is the {\itshape cyclically symmetric amplituhedron} $\mathcal{A}_{n,k,m}(\Zsym)$. We remark that if $g\in\GL_{k+m}(\RR)$, then $\mathcal{A}_{n,k,m}(g\Zsym)$ and $\mathcal{A}_{n,k,m}(\Zsym)$ are related by the automorphism $g$ of $\Gr(k,k+m)$, so the topology of $\mathcal{A}_{n,k,m}(\Zsym)$ depends only on the row span of $\Zsym$ in $\Gr(k+m,n)$.

\begin{proof}[Proof of \cref{thm:ampli}]
We consider the map $\phi:\Mat(k,n-k)\to\Gr(k,n)$ defined in~\eqref{not:embedding}. We write each $k\times (n-k)$ matrix $A\in\Mat(k,n-k)$ as $[A'\mid A'']$, where $A'$ and $A''$ are the $k\times m$ and $k\times (n-k-m)$ submatrices of $A$ with column sets $\{1,\dots,m\}$ and $\{m+1,\dots,n-k\}$, respectively. We introduce a projection map
\[
\proj:\Mat(k,n-k)\to\Mat(k,m),\quad A=[A'\mid A'']\mapsto A'.
\]
We claim that there exists an embedding $\emb:\amplZsym\hookrightarrow \Mat(k,m)$ making the following diagram commute:
\begin{equation}\label{commutative}
\begin{tikzcd}
  \Mat(k,n-k) \arrow[r,"\proj"] & \Mat(k,m)\\
  \Grtnn \arrow[u, hook,"\phi^{-1}"] \arrow[r,"\ZsymGr"]&   \amplZsym \arrow[u,dashrightarrow,hook,"\emb"]
\end{tikzcd}\;.
\end{equation}
Let $A=[A'\mid A'']\in\Mat(k,n-k)$ be a matrix such that $\phi(A)\in\Grtnn$. Then the element $\ZsymGr(\phi(A))$ of $\Gr(k,k+m)$ is the row span of the $k\times(k+m)$ matrix $[\Id_k\mid A']$, where $\Id_k$ denotes the $k\times k$ identity matrix. Thus $\mathcal{A}_{n,k,m}(\Zsym) = \ZsymGr(\Grtnn)$ lies inside the Schubert cell
\[
\{Y\in\Gr(k,k+m) : \Delta_{[k]}(Y)\neq 0\}.
\]
Every element $Y$ of this Schubert cell is the row span of $[\Id_k\mid A']$ for a unique $A'$, and we define $\emb(Y):=A'$. Thus $\emb$ embeds $\amplZsym$ inside $\Mat(k,m)$, and~\eqref{commutative} commutes.

Now we define
\[
\openball := \proj(\phi^{-1}(\Grtp)) \subset \Mat(k,m).
\]
We know from \cref{sec:proof} that $\phi^{-1}(\Grtp)$ is an open subset of $\Mat(k,n)$ whose closure $\phi^{-1}(\Grtnn)$ is compact. Note that $\proj$ is an open map (since it is essentially a projection $\RR^{k(n-k)}\to\RR^{km}$), so $\openball$ is an open subset of $\Mat(k,m)$. The closure $\closedball = \proj(\phi^{-1}(\Grtnn))$ of $\openball$ is compact. By~\eqref{commutative}, $\closedball$ is homeomorphic to $\mathcal{A}_{n,k,m}(\Zsym)$. 

Let $\flow:\RR\times\Mat(k,n-k)\to\Mat(k,n-k)$ be the map defined by~\eqref{eq:expts_dfn}, and define a similar map $f_0:\RR\times\Mat(k,m)\to\Mat(k,m)$ by
\[
f_0(t,A')_{i,j} := e^{t(\l_{k+j}-\l_i)}A'_{i,j}\quad\text{ for $1\leq i\leq k$ and $1\leq j\leq m$}.
\]
That is, $f_0(t,\proj(A)) = \pi(\act{t}{A})$ for all $t\in\RR$ and $A\in\Mat(k,n-k)$. We showed in \cref{sec:proof} that $\flow$ is a contractive flow, so $f_0$ is also a contractive flow. We also showed that
\[
\act{t}{\phi^{-1}(\Grtnn)} \subset \phi^{-1}(\Grtp) \quad \text{ for }t>0,
\]
and applying $\pi$ to both sides shows that
\[
f_0(t,\closedball) \subset \openball \quad \text{ for }t>0.
\]
Thus \cref{lem:top} applies to $\openball$ and $f_0$, showing that $\closedball$ (and hence $\mathcal{A}_{n,k,m}(\Zsym)$) is homeomorphic to a $km$-dimensional closed ball.
\end{proof}

\begin{eg}\label{eg_ampl}
Let $k=1$, $n=4$, $m=2$. We have
\begin{align*}
\tau = \begin{bmatrix}
0 & 1 & 0 & 1 \\
1 & 0 & 1 & 0 \\
0 & 1 & 0 & 1 \\
1 & 0 & 1 & 0
\end{bmatrix},\qquad
\begin{aligned}
  & u_1 = \left(\textstyle\frac12,\textstyle\frac12,\textstyle\frac12,\textstyle\frac12\right), && \lambda_1 = 2, \\
& u_2 = \left(\textstyle\frac1{\sqrt2}, 0, -\textstyle\frac1{\sqrt2}, 0\right), && \lambda_2 = 0, \\
& u_3 = \left(0,\textstyle\frac1{\sqrt2}, 0,-\textstyle\frac1{\sqrt2}\right), && \lambda_3 = 0, \\
& u_4 = \left(\textstyle\frac12, -\textstyle\frac12, \textstyle\frac12, -\textstyle\frac12\right), && \lambda_4 = -2,
\end{aligned}\qquad \Zsym= \begin{bmatrix}
\frac12 & \frac12 & \frac12 & \frac12 \\[6pt]
\frac1{\sqrt2} & 0 & -\frac1{\sqrt2} & 0 \\[6pt]
0 & \frac1{\sqrt2} & 0 & -\frac1{\sqrt2}
\end{bmatrix}.
\end{align*}
Note that this $\tau$ differs in the top-right and bottom-left entries from the one in \cref{eg_Gr24}, because $k$ is odd rather than even. Also, here the eigenvectors are required to have norm $1$. The embedding $\phi:\Mat(1,3)\hookrightarrow\Gr(1,4)$ sends a matrix $A:=\begin{bmatrix}a&b&c\end{bmatrix}$ to the line $\phi(A)$ in $\Gr(1,4)$ spanned by the vector
\[
v=u_1+au_2+bu_3+cu_4=\frac{1}{2}\left(1 + \sqrt{2}\hspace*{1.2pt}a + c, 1 + \sqrt{2}\hspace*{1.2pt}b - c, 1 - \sqrt{2}\hspace*{1.2pt}a + c, 1 - \sqrt{2}\hspace*{1.2pt}b - c\right).
\]
This line gets sent by $\ZsymGr$ to the row span of the matrix  $v\cdot \Zsym^T=\begin{bmatrix}1 & a & b\end{bmatrix}$. Finally, $\emb$ sends this element of $\Gr(1,3)$ to the matrix $\begin{bmatrix}a & b\end{bmatrix}$, so~\eqref{commutative} indeed commutes. 

In order for $\phi(A)$ to land in $\Gr_{\ge 0}(1,4)$, the coordinates of $v$ must all have the same sign, and since their sum is $2$, they must all be nonnegative:
\[
1+\sqrt2\hspace*{1.2pt} a+ c\geq 0,\quad 1+\sqrt2\hspace*{1.2pt} b-c\geq 0,\quad 1-\sqrt2\hspace*{1.2pt} a+c\geq0,\quad 1-\sqrt2\hspace*{1.2pt} b-c\geq0.
\]
These linear inequalities define a tetrahedron in $\R^3\simeq\Mat(1,3)$ with the four vertices $\left(0,\pm\sqrt2,-1\right),\left(\pm\sqrt2,0,1\right)$. The projection $\proj=\emb\circ\ZsymGr\circ\phi$ sends this tetrahedron to a square in $\R^2\simeq\Mat(1,2)$ with vertices $\left(0,\pm\sqrt2\right), \left(\pm\sqrt2,0\right)$. This square is a $km$-dimensional ball, as implied by \cref{thm:ampli}. We note that when $k=1$, the amplituhedron $\ampl$ (for any $(k+m)\times n$ matrix $Z$ with positive maximal minors) is a cyclic polytope in the projective space $\Gr(1,m+1) = \mathbb{P}^m$~\cite{sturmfels_88}, and is therefore homeomorphic to a $km$-dimensional closed ball. The case of $k\geq 2$ and $Z\neq\Zsym$ remains open.
\end{eg}

\section{The compactification of the space of electrical networks}\label{sec:E}

\subsection{A slice of the totally nonnegative Grassmannian}

We recall some background on electrical networks, and refer the reader to~\cite{Lam} and \cref{eg_NC} for details. Let $\Lin$ have basis vectors $e_I$ for $I\in \binom{[2n]}{n-1}$, and let $\Proj$ denote the corresponding projective space. We define
\[
\odd := \{2i-1 : i\in[n]\},\quad \even := \{2i : i\in [n]\}.
\]
Let $\NC$ denote the collection of \emph{non-crossing partitions} of $\odd$, i.e., set partitions of $\odd$ such that there do not exist $i < j < i' < j'$ in $\odd$ and distinct parts $I$ and $J$ with $i,i'\in I$ and $j,j'\in J$. Each $\sigma\in\NC$ comes with a \emph{dual non-crossing partition} (or \emph{Kreweras complement}) $\sigmadual$ of $\even$, defined to be the coarsest non-crossing partition of $\even$ such that $\sigma \cup \sigmadual$ is a non-crossing partition of $[2n]$. We call a subset $I\in\binom{[2n]}{n-1}$ \emph{concordant} with $\sigma$ if every part of $\sigma$ and every part of $\sigmadual$ contains exactly one element not in $I$. Let $A_\sigma\in\Lin$ be the sum of $e_I$ over all $I$ concordant with $\sigma$, and let $\Hspace$ be the linear subspace of $\Proj$ spanned by the images of $A_\sigma$ for $\sigma\in\NC$.

Identifying $\Grrr$ with its image under the Pl\"ucker embedding, we consider the subvariety $\X_n:=\Grrr\cap\Hspace$. In~\cite[Theorem~5.8]{Lam}, an embedding
\begin{align}\label{not:iota}
\iota: E_n \simeq \X_n\cap\Grtn \hookrightarrow \Gr_{\ge 0}(n-1,2n)
\end{align}
was constructed, identifying the \emph{compactification of the space of planar electrical networks with $n$ boundary vertices} $E_n$ with the compact space $\X_n\cap\Grtn$. We will need the following property of $(E_n)_{>0} := \X_n\cap\Gr_{>0}(n-1,2n)$.

\begin{prop}\label{prop:elec}
The space $(E_n)_{>0}$ is diffeomorphic to $\RR_{>0}^{\binom{n}{2}}$, and the inclusion $(E_n)_{>0}\hookrightarrow \Gr_{>0}(n-1,2n)$ is a smooth embedding.
\end{prop}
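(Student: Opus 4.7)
The plan is to exhibit an explicit smooth parametrization of $(E_n)_{>0}$ by the positive conductances on a fixed maximal planar graph, and to identify it with a restriction of Postnikov's boundary measurement map. Fix a planar graph $G$ embedded in the disk with $n$ boundary vertices on the boundary circle and with exactly $\binom{n}{2}$ edges (for example, a fan triangulation rooted at one boundary vertex). Assigning a positive conductance $c_e > 0$ to each edge $e$ of $G$ produces a planar electrical network, which under the compactification of \cite{Lam} represents a point of the top-dimensional stratum $(E_n)_{>0}$; this defines a map
\[
\Psi \colon \RR_{>0}^{\binom{n}{2}} \longrightarrow (E_n)_{>0} \subset \X_n.
\]

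First, I would recall from \cite{Lam} that the embedding $\iota$ of \eqref{not:iota} sends the point of $E_n$ represented by $(G,c)$ to the element of $\Grtn$ whose Pl\"ucker coordinates are recorded by the boundary measurements of an associated bipartite plabic graph $G^\square$ (the medial-to-plabic construction). Since $G$ is maximal planar and all conductances are positive, the associated plabic graph is reduced with strictly positive edge weights, so every Pl\"ucker coordinate of $\iota(\Psi(c))$ is strictly positive. Hence $\iota\circ\Psi$ takes values in $\Gr_{>0}(n-1,2n)$, confirming that $\Psi$ maps into $(E_n)_{>0}$.

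Next, I would invoke Postnikov's boundary measurement parametrization \cite{Pos}: for the positroid cell in $\Grtn$ containing the image of $\iota\circ\Psi$, the boundary measurement map is a rational diffeomorphism from the positive edge-weight space (after gauge fixing) onto that cell, which has dimension $\binom{n}{2}$. The rational change of variables from conductances $c_e$ to the plabic-graph edge weights is itself a diffeomorphism between positive orthants of dimension $\binom{n}{2}$. Combined, $\iota\circ\Psi$ is a smooth injective immersion of $\RR_{>0}^{\binom{n}{2}}$ into $\Gr_{>0}(n-1,2n)$; surjectivity of $\Psi$ onto $(E_n)_{>0}$ follows from the classical recoverability of conductances from the response matrix for maximal planar networks \cite{CIM, CGV}, together with the fact that $(E_n)_{>0}$ is the top-dimensional stratum of Lam's cell decomposition of $E_n$. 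Since $\iota$ is a homeomorphism onto $\X_n\cap\Grtn$, it follows that $\Psi$ is a diffeomorphism and that the inclusion $(E_n)_{>0}\hookrightarrow\Gr_{>0}(n-1,2n)$ is a smooth embedding.

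The main obstacle is the bookkeeping needed to match Lam's embedding $\iota$ with the Postnikov boundary measurement parametrization associated to $G^\square$, and to verify rigorously that the change of variables from conductances to plabic-graph edge weights is a diffeomorphism of the two positive orthants. Both points are essentially available in (or directly derivable from) \cite{Lam}, but they constitute the technical heart of the proof; once they are in hand, the diffeomorphism statement and the smoothness of the inclusion are immediate.
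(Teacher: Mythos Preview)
Your overall approach matches the paper's: parametrize $(E_n)_{>0}$ by conductances on a fixed critical network, pass through Lam's bipartite graph $N(\Gamma)$, and compare with Postnikov's parametrization of the top cell. Two concrete issues need fixing, though.

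The minor one: a fan triangulation of an $n$-gon rooted at a boundary vertex has $2n-3$ edges, not $\binom{n}{2}$. You need a well-connected critical network in the sense of \cite{CIM}; the paper simply cites \cite[Theorem~4]{CIM} for this.

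The substantive one is a dimension confusion. You correctly argue that the image of $\iota\circ\Psi$ lies in $\Gr_{>0}(n-1,2n)$, i.e.\ in the top positroid cell, but then assert that this cell has dimension $\binom{n}{2}$ and that the change of variables from conductances to plabic edge weights is ``a diffeomorphism between positive orthants of dimension $\binom{n}{2}$''. In fact the top cell has dimension $(n-1)(n+1)$, the plabic graph underlying $N(\Gamma)$ parametrizes all of $\Gr_{>0}(n-1,2n)$, and the conductance-to-edge-weight map is an injection $\RR_{>0}^{\binom{n}{2}} \hookrightarrow \RR_{>0}^{(n-1)(n+1)}$, not a diffeomorphism of equal-dimensional orthants. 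The paper's key observation, which replaces your ``rational diffeomorphism'' step, is that this injection is a \emph{monomial} map (the edge weights of $N(\Gamma)$ are monomials in the conductances), hence a homomorphism of the Lie groups $(\RR_{>0}^m,\cdot)$; after taking logarithms it becomes an injective linear map, which is automatically a smooth embedding. That is what supplies the immersion property you need. Without this, your argument establishes only that the inclusion is a smooth injection, not that its differential is everywhere injective.
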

Here $\Gr_{>0}(n-1,2n) \subset \Gr(n-1,2n)$ is an open submanifold diffeomorphic to $\RR_{>0}^{(n-1)(n+1)}$.
\begin{proof}
We recall from~\cite[Theorem~4]{CIM} that each point in $(E_n)_{>0}=\Omega_n^+$ is uniquely represented by assigning a positive real number (the \emph{conductance}) to each edge of a well-connected electrical network $\Gamma$ with $\binom{n}{2}$ edges. This gives a parametrization $(E_n)_{>0} \simeq \RR_{>0}^{\binom{n}{2}}$.  The construction $\Gamma \mapsto N(\Gamma)$ of~\cite[Section 5]{Lam} sends $\Gamma$ to a weighted bipartite graph $N(\Gamma)$ embedded into a disk compatibly with the inclusion~\eqref{not:iota}.  The edge weights of $N(\Gamma)$ are monomials in the edge weights of $\Gamma$.  Furthermore, the underlying bipartite graph $G$ of $N(\Gamma)$ parametrizes $\Gr_{>0}(n-1,2n)$. That is, we can choose a set of $(n-1)(n+1)$ edges of $G$, so that assigning arbitrary positive edge weights to these edges and weight 1 to the remaining edges induces a parametrization $\Gr_{>0}(n-1,2n) \simeq \RR_{>0}^{(n-1)(n+1)}$ (see~\cite{Pos} or~\cite{TalaskaLe}). It follows that the inclusion $\RR_{>0}^{\binom{n}{2}} \simeq (E_n)_{>0} \hookrightarrow \Gr_{>0}(n-1,2n) \simeq \RR_{>0}^{(n-1)(n+1)}$ is a monomial map, and in particular a homomorphism of Lie groups. The result follows.
\end{proof}

\subsection{Operators acting on non-crossing partitions}

For each $i\in[2n]$, we define $\Up$ and $\Down$ in $\gl_{\binom{2n}{n-1}}(\RR)$ by
\[
\Up(\e_I):=
  \begin{cases}
    \e_{I\cup\{i+1\}\setminus\{i\}}, &\text{if $i\in I$, $i+1\notin I$;}\\
    0,&\text{otherwise;}  
  \end{cases}\quad
  \Down(\e_I):=
  \begin{cases}
    \e_{I\cup\{i-1\}\setminus\{i\}}, &\text{if $i\in I$, $i-1\notin I$;}\\
    0,&\text{otherwise.}  
  \end{cases}
\]
Here the indices are taken modulo $2n$.

For $i\in\odd$, we let $\bigpart(i)\in\NC$ be the non-crossing partition which has two parts, namely $\{i\}$ and $\odd\setminus \{i\}$. For $i\in\even$, we let $\smallpart(i)\in\NC$ be the non-crossing partition with $n-1$ parts, one of which is $\{i-1,i+1\}$ and the rest being singletons. Given $\sigma\in\NC$ and $i\in [2n]$, we define the noncrossing partition $\isolate\in\NC$ as the common refinement of $\sigma$ and $\bigpart(i)$ if $i$ is odd, and the common coarsening of $\sigma$ and $\smallpart(i)$ if $i$ is even. The following combinatorial lemma is essentially~\cite[Proposition~5.15]{Lam}, and can be verified directly.
\begin{lem}\label{lemma:UD}
  For all $i\in[2n]$, we have
  \[(\Up + \Down)(\A_\sigma)=
    \begin{cases}
      0, &\text{if $\sigma=\isolate$;}\\
      \A_{\isolate},&\text{otherwise.}
    \end{cases}\]
\end{lem}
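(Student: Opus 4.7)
The plan is to verify the identity by computing the coefficient of each basis vector $\e_J$ on both sides, using a combinatorial bijection between subsets concordant with $\sigma$ and those concordant with $\isolate$. By the self-duality of the setup (which swaps $\odd \leftrightarrow \even$ and $\sigma \leftrightarrow \sigmadual$), it suffices to treat the case $i\in\odd$; the case $i\in\even$ follows by the analogous argument with the roles of $\sigma$ and $\sigmadual$ interchanged.

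I would first dispatch the degenerate case $\sigma = \isolate$. For $i$ odd, this is exactly the condition that $\{i\}$ is a singleton part of $\sigma$. Concordance with $\sigma$ then forces $i$ to be the unique missing element of this singleton, so $i \notin I$ for every $I$ in the support of $\A_\sigma$. Since $\Up$ and $\Down$ both annihilate $\e_I$ whenever $i \notin I$, we obtain $(\Up+\Down)(\A_\sigma) = 0$.

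For $\sigma \neq \isolate$, let $B$ be the part of $\sigma$ containing $i$; then $|B| \ge 2$, and $\isolate$ splits $B$ into $\{i\}$ and $B \setminus \{i\}$. On the dual side, $\widetilde{\isolate}$ is obtained from $\sigmadual$ by merging the two (distinct) parts $c_-, c_+$ of $\sigmadual$ containing $i-1$ and $i+1$ into a single part $C = c_- \cup c_+$. The key step is to exhibit a bijection between pairs $(I, \epsilon)$---where $I$ is concordant with $\sigma$, $i \in I$, and $\epsilon \in \{+,-\}$ records whether $\Up(\e_I)$ (for $\epsilon=+$) or $\Down(\e_I)$ (for $\epsilon=-$) is nonzero---and subsets $J$ concordant with $\isolate$, via
\[
(I,+) \mapsto (I \setminus \{i\}) \cup \{i+1\}, \qquad (I,-) \mapsto (I \setminus \{i\}) \cup \{i-1\}.
\]
In the forward direction, moving $i$ to $i \pm 1$ puts $i$ itself into the missing-element list of the new singleton $\{i\}$ of $\isolate$, removes the old missing element $i \pm 1$ from $c_\pm$, and preserves the single missing element of the opposite part, so $C$ ends up missing exactly one element in $\widetilde{\isolate}$. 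For the inverse, given $J$ concordant with $\isolate$ and letting $\alpha \in C$ be the unique element of $C$ not in $J$, the unique preimage is $((J \setminus \{i+1\}) \cup \{i\}, +)$ if $\alpha \in c_-$, and $((J \setminus \{i-1\}) \cup \{i\}, -)$ if $\alpha \in c_+$; the opposite choice in each case fails concordance with $\sigma$, because it would leave one of $c_-, c_+$ missing either zero or two elements. Summing over the bijection yields $(\Up+\Down)(\A_\sigma) = \A_{\isolate}$.

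The main obstacle is this last verification---showing that each $J$ concordant with $\isolate$ has exactly one preimage under $\Up+\Down$---which is elementary but requires careful case analysis using the defining property of the Kreweras complement (specifically, the way refining by a singleton in $\sigma$ coarsens $\sigmadual$ by merging the two neighboring parts).
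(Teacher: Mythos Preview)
Your proof is correct. The paper itself does not give a proof of this lemma; it only states that the result ``is essentially~\cite[Proposition~5.15]{Lam}, and can be verified directly.'' Your argument is precisely the direct verification alluded to: the reduction to $i$ odd via Kreweras duality, the observation that $i\notin I$ forces $(\Up+\Down)(\e_I)=0$ in the degenerate case, and the bijection $(I,\epsilon)\mapsto J$ in the nondegenerate case are exactly the steps one would write out in carrying out the verification. The key combinatorial input you use---that $i-1$ and $i+1$ lie in distinct parts $c_-,c_+$ of $\sigmadual$ precisely when $\{i\}$ is not a singleton of $\sigma$, and that isolating $i$ in $\sigma$ merges $c_-\cup c_+$ in the dual---is the standard Kreweras complement property, so the self-duality reduction is legitimate. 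Your injectivity check (that the ``wrong'' choice of $\epsilon$ produces an $I$ with two missing elements in one of $c_\pm$) is the only nonobvious point, and you handle it correctly.
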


\begin{eg}\label{eg_NC}
  Let $n:=3$ and $\sigma:=\{\{1,3\},\{5\}\}\in\NC$, so that $\sigmadual=\{\{2\},\{4,6\}\}$, $\isol1=\isol3=\{\{1\},\{3\},\{5\}\}$, $\isol2=\isol5=\sigma$, and $\isol4=\isol6=\{\{1,3,5\}\}$. Abbreviating $e_{\{a,b\}}$ by $e_{ab}$, we have
\[
\A_\sigma=e_{14}+e_{16}+e_{34}+e_{36}.
\]
Note that $\sigma\neq\isol1$ and
\[
(u_1 + d_1)(A_\sigma) = (e_{24} + e_{26}) + (e_{46}) = A_{\isol1},
\]
in agreement with \cref{lemma:UD} (since the dual of $\isol1$ is $\{\{2,4,6\}\}$). Similarly, we have $\sigma = \isol2$ and
\[
(u_2 + d_2)(A_\sigma) = 0 + 0 = 0.\qedhere
\]
\end{eg}

We define the operator $\UD:=\sum_{i=1}^{2n}u_i + d_i \in \gl_{\binom{2n}{n-1}}(\RR)$.

\begin{lem}\label{exp(ts)_En}
Let $X\in E_n$. We have $\exp(t\tau)\cdot X \in (E_n)_{>0}$ for all $t > 0$.
\end{lem}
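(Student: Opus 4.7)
The plan is to combine two ingredients: (i) \cref{cor:expttau_preserves_Grtp}, applied with $(k,n)$ replaced by $(n-1,2n)$, gives $\exp(t\tau)\cdot X \in \Gr_{>0}(n-1,2n)$ for any $X \in \Grtn$ and $t>0$; and (ii) the subvariety $\X_n = \Grrr \cap \Hspace$ is preserved by the action of $\exp(t\tau)$. Because $(E_n)_{>0} = \X_n \cap \Gr_{>0}(n-1,2n)$ under the embedding $\iota$ of~\eqref{not:iota}, (i) and (ii) together yield $\exp(t\tau)\cdot X \in (E_n)_{>0}$, which is exactly the lemma.

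Almost all the work goes into (ii). The key observation is that although $g \in \GL_{2n}$ acts nonlinearly on $\Gr(n-1,2n)$, it acts linearly on Pl\"ucker coordinate vectors in $\Lin$ via the exterior power $\bigwedge^{n-1} g$. Applying this to the one-parameter subgroup $g = \exp(t\tau)$ gives $\bigwedge^{n-1}\exp(t\tau) = \exp(tD)$, where $D \in \End(\Lin)$ is the derivation of $\tau$ on $\bigwedge^{n-1}\RR^{2n}$ (defined by the Leibniz rule). The crucial step is identifying $D$ with $\UD = \sum_{i=1}^{2n}(u_i+d_i)$. Unwinding definitions, the derivation of $S$ acts on $e_I = e_{i_1}\wedge\cdots\wedge e_{i_{n-1}}$ by replacing each $e_{i_j}$ with $S(e_{i_j})$, which is $e_{i_j-1}$ when $i_j\ge 2$ and $(-1)^{k-1}e_{2n}$ when $i_j = 1$ (with $k=n-1$). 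I expect the main technical point to be the sign bookkeeping: one checks that the wraparound sign $(-1)^{k-1}=(-1)^{n-2}$ exactly cancels the sign $(-1)^{n-2}$ from the $(n-2)$ transpositions needed to reorder $e_{2n}$ to the end, so that the derivation of $S$ equals $\sum_i d_i$ with no residual signs; a symmetric computation for $S^T$ gives $\sum_i u_i$, confirming $D = \UD$.

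With $D = \UD$ in hand, \cref{lemma:UD} says each generator $A_\sigma$ of $\Hspace$ is sent by $\UD$ to a sum of zeros and further basis vectors $A_{\isolate}$ of $\Hspace$. Thus $\UD$ preserves the linear subspace of $\Lin$ underlying $\Hspace$, and so does the one-parameter group $\exp(t\UD) = \exp(tD)$. Consequently, for $X\in E_n\subset\X_n$ the Pl\"ucker coordinate vector of $\exp(t\tau)\cdot X$ still lies in $\Hspace$, so $\exp(t\tau)\cdot X \in \X_n$. Combined with ingredient (i), this yields $\exp(t\tau)\cdot X \in (E_n)_{>0}$.
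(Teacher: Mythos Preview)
Your proposal is correct and follows essentially the same approach as the paper: reduce to \cref{cor:expttau_preserves_Grtp} plus the invariance of $\Hspace$ under $\exp(t\tau)$, the latter established via \cref{lemma:UD} by identifying the infinitesimal action on Pl\"ucker coordinates with $\UD$. The only difference is in presentation: the paper phrases the invariance step as an integral-curve argument in $\Proj$ (citing the analogue of~\eqref{expts_Pluckers}), whereas you work linearly in $\Lin$, making the identification $\bigwedge^{n-1}\exp(t\tau)=\exp(t\UD)$ and the wraparound sign cancellation explicit---your version is slightly more concrete but otherwise the same argument.
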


\begin{proof}
This follows from \cref{cor:expttau_preserves_Grtp}, once we show that $\exp(t\tau)\cdot X \in \Hspace$ for $X \in \X_n$ and $t \in \RR$. To do this, we identify $\Grrr$ with its image in $\Proj$ under the Pl\"ucker embedding sending $X \in \Grrr$ to $\sum_{I\in{\binom{[2n]}{n-1}}}\Plucker_I(X)e_I\in\Proj$.  Then for any $X\in\X_n$, we have a smooth curve $t\mapsto\exp(t\tau)\cdot X$ in $\Proj$. As in \eqref{expts_Pluckers}, we find that
\[
\exp(t\tau)\cdot X = X+t\UD(X)+O(t^2)\quad \text{in }\Proj
\]
as $t\to 0$.  Therefore $\exp(t\tau)\cdot X$ is an integral curve for the smooth vector field on $\Proj$ defined by the infinitesimal action of $\UD$. By \cref{lemma:UD}, this vector field is tangent to $\Hspace$, so $\exp(t\tau)\cdot X\in\Hspace$ for all $t\in\RR$.
\end{proof}

\begin{proof}[Proof of \cref{thm:En}]
We are identifying $(E_n)_{>0}$ as a subset $\Gr_{>0}(n-1,2n)$ via the smooth embedding of \cref{prop:elec}. In turn, $\Gr_{>0}(n-1,2n)$ is smoothly embedded inside $\Mat(n-1,n+1)$ by the map $\phi^{-1}$ defined in~\eqref{not:embedding}. Thus $\openball:=\phi^{-1}((E_n)_{>0})\subset \Mat(n-1,n+1)$ is a smoothly embedded submanifold of $\Mat(n-1,n+1)$ of dimension $\binom{n}{2}$. The map $\phi^{-1}$ sends the compact set $E_n$ homeomorphically onto its image $\phi^{-1}(E_n)$.  Since $(E_n)_{>0}$ is dense in $E_n$, we have that $\phi^{-1}(E_n)$ equals the closure $\closedball$ of $\openball$. Let $\flow:\RR\times\Mat(n-1,n+1)\to \Mat(n-1,n+1)$ be the map defined by~\eqref{eq:expts_dfn}. We showed in \cref{sec:proof} that $\flow$ is a contractive flow, and \cref{exp(ts)_En} implies that~\eqref{eq:invariant} holds for our choice of $\openball$ and $\flow$. Thus \cref{lem:top} applies, completing the proof.
\end{proof}

\bibliographystyle{alpha}
\bibliography{ball_bib}

\newcommand{\etalchar}[1]{$^{#1}$}
\begin{thebibliography}{AHBC{\etalchar{+}}16}

\bibitem[AHBC{\etalchar{+}}16]{abcgpt}
Nima Arkani-Hamed, Jacob Bourjaily, Freddy Cachazo, Alexander Goncharov,
  Alexander Postnikov, and Jaroslav Trnka.
\newblock {\em Grassmannian Geometry of Scattering Amplitudes}.
\newblock Cambridge University Press, Cambridge, 2016.

\bibitem[AHBL17]{ABL}
Nima Arkani-Hamed, Yuntao Bai, and Thomas Lam.
\newblock Positive geometries and canonical forms.
\newblock {\em J. High Energy Phys.}, (11):39, 2017.

\bibitem[AHT14]{AT}
Nima Arkani-Hamed and Jaroslav Trnka.
\newblock The amplituhedron.
\newblock {\em J. High Energy Phys.}, (10):30, 2014.

\bibitem[AHTT18]{Arkani-Hamed-Thomas-Trnka}
Nima Arkani-Hamed, Hugh Thomas, and Jaroslav Trnka.
\newblock Unwinding the amplituhedron in binary.
\newblock {\em J. High Energy Phys.}, (1):16, 2018.

\bibitem[AKSM04]{AKSM}
V{\'\i}ctor Ayala, Wolfgang Kliemann, and Luiz A.~B. San~Martin.
\newblock Control sets and total positivity.
\newblock {\em Semigroup Forum}, 69(1):113--140, 2004.

\bibitem[ARW17]{ARW}
Federico Ardila, Felipe Rinc\'on, and Lauren Williams.
\newblock Positively oriented matroids are realizable.
\newblock {\em J. Eur. Math. Soc. (JEMS)}, 19(3):815--833, 2017.

\bibitem[Bj{\"o}84]{Bjo}
A.~Bj{\"o}rner.
\newblock Posets, regular {CW} complexes and {B}ruhat order.
\newblock {\em European J. Combin.}, 5(1):7--16, 1984.

\bibitem[CdVGV96]{CGV}
Yves Colin~de Verdi\`ere, Isidoro Gitler, and Dirk Vertigan.
\newblock R\'eseaux \'electriques planaires. {II}.
\newblock {\em Comment. Math. Helv.}, 71(1):144--167, 1996.

\bibitem[CIM98]{CIM}
E.~B. Curtis, D.~Ingerman, and J.~A. Morrow.
\newblock Circular planar graphs and resistor networks.
\newblock {\em Linear Algebra Appl.}, 283(1-3):115--150, 1998.

\bibitem[Ede81]{edelman_81}
Paul~H. Edelman.
\newblock The {B}ruhat order of the symmetric group is lexicographically
  shellable.
\newblock {\em Proc. Amer. Math. Soc.}, 82(3):355--358, 1981.

\bibitem[FS00]{FS}
Sergey Fomin and Michael Shapiro.
\newblock Stratified spaces formed by totally positive varieties.
\newblock {\em Michigan Math. J.}, 48:253--270, 2000.
\newblock Dedicated to William Fulton on the occasion of his 60th birthday.

\bibitem[GKL19a]{GKL}
Pavel Galashin, Steven~N. Karp, and Thomas Lam.
\newblock Regularity theorem for totally nonnegative flag varieties.
\newblock {\em
  \textup{\href{https://arxiv.org/abs/1904.00527}{\texttt{arXiv:1904.00527}}}},
  2019.

\bibitem[GKL19b]{GP}
Pavel Galashin, Steven~N. Karp, and Thomas Lam.
\newblock The totally nonnegative part of {$G/P$} is a ball.
\newblock {\em Adv. Math.}, 351:614--620, 2019.

\bibitem[Her14]{Hersh}
Patricia Hersh.
\newblock Regular cell complexes in total positivity.
\newblock {\em Invent. Math.}, 197(1):57--114, 2014.

\bibitem[HK21]{hersh_kenyon}
Patricia Hersh and Richard Kenyon.
\newblock Shellability of face posets of electrical networks and the {CW} poset
  property.
\newblock {\em Adv. in Appl. Math.}, 127:102178, 37, 2021.

\bibitem[Kar19]{karp_cyclic_shift}
Steven~N. Karp.
\newblock Moment curves and cyclic symmetry for positive {G}rassmannians.
\newblock {\em Bull. Lond. Math. Soc.}, 51(5):900--916, 2019.

\bibitem[Ken12]{Kenyon}
R.~Kenyon.
\newblock The {L}aplacian on planar graphs and graphs on surfaces.
\newblock In {\em Current developments in mathematics, 2011}, pages 1--55. Int.
  Press, Somerville, MA, 2012.

\bibitem[KW19]{Karp-Williams}
Steven~N. Karp and Lauren~K. Williams.
\newblock The {$m=1$} amplituhedron and cyclic hyperplane arrangements.
\newblock {\em Int. Math. Res. Not. IMRN}, (5):1401--1462, 2019.

\bibitem[Lam15]{Lammatchings}
Thomas Lam.
\newblock The uncrossing partial order on matchings is {E}ulerian.
\newblock {\em J. Combin. Theory Ser. A}, 135:105--111, 2015.

\bibitem[Lam16]{LamCDM}
Thomas Lam.
\newblock Totally nonnegative {G}rassmannian and {G}rassmann polytopes.
\newblock In {\em Current developments in mathematics 2014}, pages 51--152.
  Int. Press, Somerville, MA, 2016.

\bibitem[Lam18]{Lam}
Thomas Lam.
\newblock Electroid varieties and a compactification of the space of electrical
  networks.
\newblock {\em Adv. Math.}, 338:549--600, 2018.

\bibitem[LP12]{LP1}
Thomas Lam and Pavlo Pylyavskyy.
\newblock Total positivity in loop groups, {I}: {W}hirls and curls.
\newblock {\em Adv. Math.}, 230(3):1222--1271, 2012.

\bibitem[Lus94]{Lus2}
G.~Lusztig.
\newblock Total positivity in reductive groups.
\newblock In {\em Lie theory and geometry}, volume 123 of {\em Progr. Math.},
  pages 531--568. Birkh\"auser Boston, Boston, MA, 1994.

\bibitem[Lus98]{LusIntro}
George Lusztig.
\newblock Introduction to total positivity.
\newblock In {\em Positivity in {L}ie theory: open problems}, volume~26 of {\em
  De Gruyter Exp. Math.}, pages 133--145. de Gruyter, Berlin, 1998.

\bibitem[MR20]{marsh_rietsch}
R.~J. Marsh and K.~Rietsch.
\newblock The {$B$}-model connection and mirror symmetry for {G}rassmannians.
\newblock {\em Adv. Math.}, 366:107027, 131, 2020.

\bibitem[Pos07]{Pos}
Alexander Postnikov.
\newblock Total positivity, {G}rassmannians, and networks.
\newblock {\em
  \textup{\href{http://math.mit.edu/~apost/papers/tpgrass.pdf}{\texttt{http://math.mit.edu/
  \textasciitilde apost/papers/tpgrass.pdf}}}}, 2007.

\bibitem[PSW09]{PSW}
Alexander Postnikov, David Speyer, and Lauren Williams.
\newblock Matching polytopes, toric geometry, and the totally non-negative
  {G}rassmannian.
\newblock {\em J. Algebraic Combin.}, 30(2):173--191, 2009.

\bibitem[Rie99]{Rietsch}
Konstanze Rietsch.
\newblock An algebraic cell decomposition of the nonnegative part of a flag
  variety.
\newblock {\em J. Algebra}, 213(1):144--154, 1999.

\bibitem[Rie06]{rietsch06}
K.~Rietsch.
\newblock Closure relations for totally nonnegative cells in {$G/P$}.
\newblock {\em Math. Res. Lett.}, 13(5-6):775--786, 2006.

\bibitem[Rie08]{rietsch_08}
Konstanze Rietsch.
\newblock A mirror symmetric construction of {$qH^\ast_T(G/P)_{(q)}$}.
\newblock {\em Adv. Math.}, 217(6):2401--2442, 2008.

\bibitem[RW10]{RW}
Konstanze Rietsch and Lauren Williams.
\newblock Discrete {M}orse theory for totally non-negative flag varieties.
\newblock {\em Adv. Math.}, 223(6):1855--1884, 2010.

\bibitem[Sco79]{scott}
R.~F. Scott.
\newblock Note on a theorem of {P}rof. {C}ayley's.
\newblock {\em Messeng. Math.}, 8:155--157, 1879.

\bibitem[Stu88]{sturmfels_88}
Bernd Sturmfels.
\newblock Totally positive matrices and cyclic polytopes.
\newblock In {\em Proceedings of the {V}ictoria {C}onference on {C}ombinatorial
  {M}atrix {A}nalysis ({V}ictoria, {BC}, 1987)}, volume 107, pages 275--281,
  1988.

\bibitem[Tal11]{TalaskaLe}
Kelli Talaska.
\newblock Combinatorial formulas for {L}e-coordinates in a totally nonnegative
  {G}rassmannian.
\newblock {\em J. Combin. Theory Ser. A}, 118(1):58--66, 2011.

\bibitem[Wil07]{Wil}
Lauren~K. Williams.
\newblock Shelling totally nonnegative flag varieties.
\newblock {\em J. Reine Angew. Math.}, 609:1--21, 2007.

\end{thebibliography}

\end{document}